\documentclass[12pt,a4paper]{article}

\usepackage[margin=1in]{geometry}
\usepackage[T1]{fontenc}
\usepackage{microtype}

\usepackage{amsmath,amssymb,amsthm}
\IfFileExists{newtxtext.sty}{\usepackage{newtxtext,newtxmath}}{\usepackage{mathptmx}}
\numberwithin{equation}{section}

\usepackage{array}
\usepackage{tabularx}
\usepackage{booktabs}
\usepackage{longtable,colortbl,float}
\usepackage{enumitem}
\usepackage{needspace}\usepackage{tikz}\usetikzlibrary{arrows.meta,positioning} 

\newcommand{\corestep}[2]{%
  \par\addvspace{0.9\baselineskip}\noindent
  \begin{tikzpicture}
    \node[draw=gray!50, line width=0.4pt, rounded corners=2pt,
      top color=gray!12, bottom color=gray!9,
      inner xsep=10pt, inner ysep=8pt,
      text width=\dimexpr\linewidth-21pt\relax, align=left]
      {\textbf{#1}\par\smallskip #2};
  \end{tikzpicture}\par\nobreak\smallskip
}

\newcommand{\softbox}[1]{\begingroup\setlength{\fboxsep}{7pt}%
\fcolorbox{gray!50}{gray!11}{$\displaystyle #1$}\endgroup}

\newcommand{\quietemphasis}[1]{\par\addvspace{5pt}\noindent\emph{#1}\par\addvspace{5pt}}

\newcommand{\plainstep}[2]{\par\addvspace{0.8\baselineskip}\noindent
\textbf{#1}\par\nobreak\smallskip\noindent #2\par\addvspace{0.6\baselineskip}}
\newcommand{\plainbox}[1]{\begingroup\displaystyle #1\endgroup}
\newenvironment{plainstatement}{\par\addvspace{7pt}\noindent
\begin{minipage}{\linewidth}\raggedright}
{\end{minipage}\par\addvspace{7pt}}

\theoremstyle{plain}
\newtheorem{hubtheorem}{Theorem}[section]

\newtheorem{hublemma}[hubtheorem]{Lemma}

\theoremstyle{definition}

\theoremstyle{remark}

\usepackage[hidelinks,hypertexnames=false]{hyperref}

\hypersetup{pdfauthor={Gabriel Ben-Simon},
    pdftitle={
        Cauchy-Type Convolution Integrals as the Starting Point for a First Encounter with Complex Analysis
    }
}

\title{
    \textbf{Cauchy-Type Convolution Integrals as the Starting Point for a First Encounter with Complex Analysis}
}
\author{
    Gabriel Ben-Simon\\
    \small Afeka Academic College of Engineering\\
    \small Tel Aviv, Israel\\
    \small\texttt{GavrielBe@afeka.ac.il}
}
\date{}

\begin{document}
\maketitle
\begin{abstract}
Alongside three established presentations of elementary complex analysis, we propose an approach centered on one organizing question: when is contour convolution well defined on homotopy classes? From this starting point, we develop the foundational results of the subject. A supporting conceptual distinction identifies those elements of real and complex analysis that share the same operational and symbolic rules. We refer to this collection as the ``Toolbox.'' Examples and structural connections illustrate the explanatory value and reach of the approach across different aspects of the theory. We also examine the relationship of our approach to the established presentations.
\end{abstract}

\section{Introduction}
\label{sec:introduction}

There is an open secret in the classical organization of a first course in
complex analysis: the theory begins in earnest with the Cauchy--Goursat theorem.
Three familiar approaches, which may overlap, provide a useful context:
\begin{enumerate}[label=(\roman*),leftmargin=2.3em,itemsep=3pt,topsep=5pt]
\item \emph{Power series as a starting point}: for example, Cartan and
Lang~\cite{Cartan1963,Lang1999}.
\item \emph{The geometric point of view}: for example,
Needham~\cite{Needham2023}.
\item \emph{The classical course development}: from complex numbers and
differentiability to the Cauchy--Goursat theorem and its consequences;
see, for example, Ahlfors, Bak--Newman, Saff--Snider, and Isaev~\cite{Ahlfors1979,BakNewman2010,SaffSnider2003,Isaev2017}.
\end{enumerate}
The approach proposed here takes a different starting point.
\quietemphasis{The central idea of this paper can be expressed through a single question and a single answer:}
\begin{center}
\begin{tikzpicture}
\node[draw=gray!55,line width=0.55pt,rounded corners=3pt,
 fill=white,inner sep=11pt,
 text width=\dimexpr\linewidth-24pt\relax,align=center] {
\textit{Q: What is the core of the basics of complex analysis?}\\[7pt]
A: The study of when convolution is well defined on homotopy classes of paths.
};
\end{tikzpicture}
\end{center}
More precisely, given a domain \(D\subset\mathbb C\) and a vector space
\(V\subset C(D)\), we ask when the operation
\begin{equation}\label{eq:s1-seed}
 (f*_\gamma g)(z)=\int_\gamma f(z-w)g(w)\,dw
\end{equation}
depends only on the homotopy class of the path.
Both factors must be defined throughout the path and its deformation;
Section~2 makes this admissibility requirement precise. The question asks
when \(*_\gamma\) becomes a well-defined operation on \([\gamma]\).
Placing this question and answer at the beginning both reorganizes the
topics of the theory and clarifies their relation to other presentations.

The first step rests on a classical equivalence: the Cauchy--Goursat
theorem gives homotopy invariance of the Cauchy integral away from its
pole. We take the reverse starting point: we assume that the Cauchy
integral is invariant under homotopy. We work here with smooth homotopies; Section~2 also treats the continuous version.
These classical connections are clearly discussed, for example, in
\cite[Chs.~8--10]{StewartTall2018}, \cite[\S8.2.1]{Remmert1991};
for the homotopy formulation, see also \cite[Lectures~9--10]{Isaev2017}.

The purpose of this paper is to place the question in
\eqref{eq:s1-seed} at the center of the development and to show what follows
from keeping it there.
\quietemphasis{Once the operation is defined on homotopy classes,
it provides a common starting point for reconstruction, coefficient
projections, changes of variables, and perturbations. Furthermore, we use these
connections to reorganize the elementary theory, explain its relation to
other presentations, relate it to other mathematical theories, and reach
selected more advanced results.} We also hope to simplify the core
arguments and make their structure more transparent.
The contribution lies in this sustained development of the initial
question, together with a precise separation of the tools used to carry
it out.

\medskip
\noindent\textbf{``The Real Conceptual Toolbox of Complex Analysis.''}
Our core idea is complemented by an important auxiliary concept: the
``Real Conceptual Toolbox of Complex Analysis,'' or simply the
\emph{Toolbox}; ``real'' here refers to real-variable analysis.
The reader is invited to pause here and consult the Toolbox tables in
Appendix~\ref{app:toolbox} to obtain a precise symbolic description of what we call the
Toolbox. Much of the elementary calculus retains the same formal organization when
\(x\) is replaced by \(z\). Limits and continuity obey the familiar rules;
the arithmetic of derivatives and the chain rule retain their form;
parametrized integrals admit reparametrization, integration by parts, and
the fundamental theorem whenever a primitive exists. Choosing a parameter
to simplify the calculation of a line integral, and using vanishing
integrals on closed paths to identify the existence of a potential or
primitive, are familiar tasks from technical exercises.
\quietemphasis{The lesson is clear: the Toolbox embodies the idea that the symbolic
calculus is essentially the same, both in its computational rules and in
its formal structure.}
We will see that this parallelism is inherent in the implementation of
the core ideas. Wherever possible, we will explicitly highlight the
indispensable role of the Toolbox in that implementation. Much of the
development below alternates between the structure supplied by homotopy invariance of the convolution integrals
and the inherited operations that make it effective.\\

To motivate our starting point, consider the role of the Toolbox.
It is an indispensable part of complex analysis. Viewed solely through
its scalar correspondences, however, it does not yet reveal the analytic
structure that distinguishes the theory. This inherited apparatus defines
complex differentiability; the definition alone does not explain the
consequences that make complex analysis a theory of its own.

\medskip
\noindent\textbf{The primitive theorem.}
A possible motivational starting point is the following. Looking at the
tables comparing real and complex scalar calculus, we recover the
Newton--Leibniz formula under suitable conditions and construct a
differentiable function as an antiderivative. As suggested above, this
formal parallel alone does not explain the specifically complex theory.
Let us now turn to the corresponding line-integral viewpoint and consider
integrals along closed paths. What does the primitive theorem tell us?

\begin{plainstatement}
Let \(D\) now be simply connected and \(f\in C(D)\). For piecewise
\(C^1\) paths in \(D\), the following are equivalent:
\begin{enumerate}[label=(\roman*),leftmargin=2.3em,itemsep=2pt,topsep=5pt]
 \item \(f\) has a complex primitive \(F\) on \(D\).
 \item The integral of \(f\) around every closed path is zero.
 \item Integrals of \(f\) between fixed endpoints are independent of the path.
 \item Integrals of \(f\) are invariant under homotopies with fixed endpoints.
\end{enumerate}
The first three conditions are equivalent on any domain; simple
connectivity makes the fourth equivalent to them.
\end{plainstatement}
\quietemphasis{Condition (iv) marks the crucial distinction here: homotopy invariance of
the complex integral forces analytic structure on its continuous
integrand.} Although all four conditions can also be expressed through
real line integrals, the complex multiplication ties the component
integrals together. Thus, the line-integral viewpoint reveals a rigidity that goes
beyond the symbolic calculus recorded in the Toolbox.

In convolution notation, the closed-path condition is
\(1*_{[\gamma]}f=0\), where \(1\) denotes the constant kernel and
\(\gamma\) is closed. Before descent is justified, one writes
\(1*_\gamma f\).
To state explicitly what this descent to homotopy classes entails:
a continuous complex function satisfying the closed-path condition is
itself analytic, as the Morera argument will show.

The condition does much more than produce one differentiable function:
it forces analytic structure on the original integrand.

The inherited tools provide the language and the point of entry; the
invariance condition reveals the analytic rigidity. It is in this sense
that the Toolbox, considered as the theory itself, is an ``empty shell'':
it helps us reach the core but does not by itself explain it.

\quietemphasis{We now pursue a fundamental mathematical strategy: understanding a phenomenon by placing it in a more general framework.}
We are thus led to our main question: given a domain
\(D\subset\mathbb C\) and a vector space \(V\subset C(D)\), when can
\(f*_{[\gamma]}g\) be defined for functions in \(V\)? We aim to show the
advantages of placing this question at the beginning and retaining it as
the central question of the development. Together with the distinction and interplay
between the Toolbox and the structure it serves, this is the organizing
principle of the proposed presentation of elementary complex analysis.

The remainder of the paper is organized as follows. Section~2 shows how
our guiding principle generates the foundational theorems on which the
theory is built. Although the mathematics is elementary, we retain the
core proofs because their arguments are central to the paper and to the
development of its guiding idea. It also yields naturally an
\emph{analytic resolution of the identity}---a term borrowed to describe the reconstruction of a function
from its power-series components. We thus obtain both the foundations
and the theory's ``computational arm'', and briefly indicate how this
starting point relates to other approaches to a first course.

Section~3 explores the reach of the approach through a sampling of arguments
and results, including structural parallels with other fundamental
theories. Further results of elementary complex analysis can be treated along similar lines.
Its principal themes are how the core idea yields elegance of
argument, economy of symbolic calculation, and connections with more
advanced topics, while, where appropriate, maintaining a close conceptual
connection between the results and convolution on homotopy classes.

Section~4 draws together the main conclusions and discusses possible
implications for teaching. Appendix~\ref{app:toolbox} gives the detailed
symbolic comparisons constituting the Toolbox.

\medskip
This paper belongs to a group of works arising from a perspective that
has crystallized in recent years, following two decades of teaching
complex analysis, ordinary and partial differential equations, Fourier
analysis, and linear algebra. These works propose that substantial parts
of such courses can be organized around a small number of conceptual
cores, offering a clearer understanding together with computational,
technical, explanatory, and pedagogical consequences, and possible
relevance to AI-assisted mathematical work. Related developments concern
ordinary differential equations~\cite{BenSimonODE2026} and partial
differential equations~\cite{BenSimonPDE2026}, the latter in preparation.

\section{From Homotopy Invariance to the Foundational Results of Complex Analysis}
\label{sec:cauchy-convolutional-hub}

This section develops the foundational theorems of complex analysis from
homotopy invariance of convolution integrals. Starting with continuous
functions, we obtain reproduction, analyticity, the resolution of the
identity---here, reconstruction from power-series components---and the derivative formulas; the same mechanism yields Morera's
theorem and the Cauchy--Goursat vanishing conclusion. The arguments are
elementary; we retain their essential details because their sequence is
central to the approach.

Throughout, \emph{holomorphic} means complex differentiable on an open set,
whereas \emph{analytic} means locally representable by a convergent power
series. Their equivalence is established in the course of the construction.

\subsection{The operation and its domains}

Let \(D\subset\mathbb C\) be a domain and \(V\subset C(D)\) a vector
space. We examine the descent question for each \(f\in V\).
For a continuous kernel \(K\) on an open set \(U\), define
\begin{equation}\label{eq:s2-convolution}
 (K*_\gamma f)(z)=\int_\gamma K(z-\zeta)f(\zeta)\,d\zeta,
 \qquad \gamma\subset D\cap(z-U).
\end{equation}
Paths are piecewise \(C^1\). Homotopies are continuous, with endpoints fixed
for open paths; for closed paths we allow free homotopies through loops.
For fixed \(z\), the admissible domain is \(D\cap(z-U)\).
The notation \(K*_{[\gamma]}f\) is justified precisely when the value is the
same for any two admissible paths in that homotopy class.

For \(K=1\) on a simply connected domain \(D\), the descent question for
\(1*_\gamma f\) is precisely the primitive criterion recalled in the
Introduction.

The kernel \(\kappa_k(w)=w^{-k}\) is defined on
\(\mathbb C\setminus\{0\}\) for \(k\geq1\); put \(\kappa_0=1\).
We normalize its convolution in the customary Cauchy orientation:
\begin{equation}\label{eq:s2-operators}
 C_{\gamma,k}f(z)
 :=\frac{(-1)^k}{2\pi i}(\kappa_k*_\gamma f)(z)
 =\frac1{2\pi i}\int_\gamma\frac{f(\zeta)}{(\zeta-z)^k}\,d\zeta,
 \qquad k=0,1,2,\ldots .
\end{equation}
For \(k=0\) there is no excluded point and the value is independent of \(z\).
For \(k\geq1\), paths and homotopies must avoid \(z\).
Write \(C_\gamma=C_{\gamma,1}\).
The factor \((-1)^k\) records the sign difference between \(z-\zeta\) in
convolution and \(\zeta-z\) in the standard Cauchy kernel.

\corestep{From invariance to reproduction.}{%
Homotopy invariance of the Cauchy integral yields the Cauchy reproduction
formula for continuous functions.}

The structural hypothesis on a continuous function is
\begin{equation}\tag{H}\label{eq:s2-H}
 \softbox{\begin{gathered}
 \textbf{Condition (H)}\\[3pt]\text{For every }z\in D,\quad C_\gamma f(z)\text{ depends only on }[\gamma]\\
 \text{for closed paths in }D\setminus\{z\}.
 \end{gathered}}
\end{equation}
Here \(\gamma\) is closed. The singularity of the kernel is essential:
a positive circle about \(z\) cannot contract in the admissible domain.

We record the precise statement and proof of this classical implication.

\begin{hubtheorem}[Reproduction]\label{thm:s2-reproduction}
Suppose \(f\in C(D)\) satisfies \eqref{eq:s2-H}.
If \(\gamma\) is homotopic in \(D\setminus\{z\}\) to a small positively
oriented circle about \(z\), then \(C_\gamma f(z)=f(z)\).
In particular, for every closed disk \(\overline{B(a,R)}\subset D\),
\begin{equation}\label{eq:s2-reproduction}
 C_{\partial B(a,R)}f=f\quad\text{on }B(a,R),
\end{equation}
with positive boundary orientation.
\end{hubtheorem}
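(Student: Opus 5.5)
By \eqref{eq:s2-H}, since \(\gamma\) is homotopic in \(D\setminus\{z\}\) through closed loops to a small positive circle \(\partial B(z,r)\), we have \(C_\gamma f(z)=C_{\partial B(z,r)}f(z)\). The same hypothesis shows that \(C_{\partial B(z,r)}f(z)\) does not depend on \(r\) for all \(r\) small enough that \(\overline{B(z,r)}\subset D\): the circles of radii \(r_1<r_2\) are freely homotopic in the closed annulus \(r_1\le|\zeta-z|\le r_2\), which lies in \(D\setminus\{z\}\), via the radial homotopy \(\zeta_s(t)=z+((1-s)r_1+sr_2)e^{it}\). So the first claim reduces to showing that this radius-independent value equals \(f(z)\). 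The plan is to compute the value in the limit \(r\to0\).

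Parametrizing \(\zeta=z+re^{it}\), \(d\zeta=ire^{it}\,dt\), a Toolbox reparametrization gives
\[
C_{\partial B(z,r)}f(z)=\frac1{2\pi}\int_0^{2\pi}f(z+re^{it})\,dt .
\]
Subtracting \(f(z)=\frac1{2\pi}\int_0^{2\pi}f(z)\,dt\) yields the bound
\[
\bigl|C_{\partial B(z,r)}f(z)-f(z)\bigr|\le\max_{|\zeta-z|=r}|f(\zeta)-f(z)|,
\]
which tends to \(0\) as \(r\to0\) by continuity of \(f\) at \(z\). Since the left-hand side does not depend on \(r\), it must vanish, and hence \(C_\gamma f(z)=f(z)\). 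Only continuity of \(f\) is used here; no differentiability is assumed.

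For \eqref{eq:s2-reproduction}, fix \(z\in B(a,R)\) and take \(\gamma=\partial B(a,R)\). I would exhibit an explicit homotopy in \(D\setminus\{z\}\) to a small circle about \(z\). Let \(\rho>0\) be small enough that \(\rho<R-|z-a|\), and use the straight-line homotopy between the two centres and radii,
\[
H(s,t)=\bigl((1-s)a+sz\bigr)+\bigl((1-s)R+s\rho\bigr)e^{it}.
\]
Each intermediate loop is the circle with centre \(c_s=(1-s)a+sz\) and radius \(r_s=(1-s)R+s\rho\).

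The main obstacle is verifying admissibility: each loop must avoid \(z\) and lie in \(D\). Both follow from the containment of disks. Since \(|z-c_s|=(1-s)|z-a|<r_s\), the point \(z\) lies strictly inside every intermediate circle, so no loop passes through \(z\). The disk \(\overline{B(c_s,r_s)}\) is contained in \(\overline{B(a,R)}\) because \(|c_s-a|+r_s=s|z-a|+(1-s)R+s\rho\le R\), using \(\rho\le R-|z-a|\). Hence every loop lies in \(\overline{B(a,R)}\setminus\{z\}\subset D\setminus\{z\}\). Applying the first part then gives \(C_{\partial B(a,R)}f(z)=f(z)\) for every \(z\in B(a,R)\).
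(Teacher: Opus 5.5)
Your proposal is correct and follows the paper's proof: invariance reduces the value to a small circle about $z$, the parametrization turns it into a circle average whose distance from $f(z)$ is bounded by $\max_{|\zeta-z|=r}|f(\zeta)-f(z)|$, and radius-independence forces that distance to be zero; the disk case then uses the same straight-line homotopy of centres and radii. You are slightly more explicit than the paper in two places, namely the radial homotopy showing independence of the small radius and the containment check $|c_s-a|+r_s\le R$, both of which the paper only asserts.
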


\begin{proof}
For sufficiently small \(r>0\), let \(c_r(t)=z+re^{it}\).
Invariance and the parametrization of the circle give
\[
 C_\gamma f(z)=C_{c_r}f(z)
 =\frac1{2\pi}\int_0^{2\pi} f(z+re^{it})\,dt.
\]
Thus
\[
 |C_\gamma f(z)-f(z)|
 \leq\max_{|\zeta-z|=r}|f(\zeta)-f(z)|\longrightarrow0.
\]
The left side is independent of \(r\), so it vanishes.

For completeness, when \(z\in B(a,R)\), choose
\(0<r<R-|z-a|\). The circles
\[
 H(s,t)=(1-s)a+sz+\big((1-s)R+sr\big)e^{it}
\]
join \(\partial B(a,R)\) to \(c_r\). They lie in
\(\overline{B(a,R)}\) and avoid \(z\), since their radius exceeds
\((1-s)|a-z|\). The first part therefore applies to every interior \(z\).
\end{proof}

The role of the Toolbox is transparent in this argument.
The contour condition is a homotopy condition in the punctured domain;
on a multiply connected domain it cannot be replaced merely by
``winding number one about \(z\).''

\corestep{From reproduction to analytic reconstruction.}{%
Cauchy reproduction yields analyticity, the power-series resolution of the
identity, and the higher derivative formulas.}

We record the precise statement and proof of this classical implication.

\begin{hubtheorem}[Analytic reconstruction]\label{thm:s2-analytic}
Let \(f\) be continuous on \(\overline{B(a,R)}\), and suppose
\(C_\gamma f=f\) on \(B(a,R)\), where
\(\gamma=\partial B(a,R)\) is positively oriented. Then
\begin{equation}\label{eq:s2-taylor}
 f(z)=\sum_{n=0}^{\infty}a_n(z-a)^n,
 \qquad a_n=C_{\gamma,n+1}f(a),
 \qquad |z-a|<R.
\end{equation}
The series converges locally uniformly. The function has derivatives of
all orders, and throughout the disk
\begin{equation}\label{eq:s2-derivatives}
 C_{\gamma,k}f=\frac{1}{(k-1)!}\,\partial^{\,k-1}f,
 \qquad k\geq1,\qquad \partial f=f'.
\end{equation}
\end{hubtheorem}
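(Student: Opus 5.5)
The plan is the classical geometric-series expansion of the Cauchy kernel, followed by differentiation under the integral sign. Fix \(z\) with \(|z-a|\le\rho<R\). For \(\zeta\in\gamma\) we have \(|z-a|/|\zeta-a|\le\rho/R<1\), so
\[
 \frac{1}{\zeta-z}=\frac{1}{(\zeta-a)\bigl(1-\frac{z-a}{\zeta-a}\bigr)}
 =\sum_{n=0}^{\infty}\frac{(z-a)^n}{(\zeta-a)^{n+1}} .
\]
The series converges uniformly in \(\zeta\in\gamma\) and \(|z-a|\le\rho\), by the Weierstrass test with majorant \(R^{-1}(\rho/R)^n\).

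Multiply this expansion by the continuous function \(f(\zeta)/(2\pi i)\), which is bounded by \(M=\max_\gamma|f|\). Uniform convergence lets us integrate term by term, a Toolbox step for parametrized integrals. Using the hypothesis \(C_\gamma f=f\), we get
\[
 f(z)=C_\gamma f(z)=\sum_{n\ge0}a_n(z-a)^n,\qquad a_n=C_{\gamma,n+1}f(a).
\]
The Cauchy-type bound \(|a_n|\le MR^{-n}\) follows from the length estimate. It gives absolute and uniform convergence on \(|z-a|\le\rho\), hence locally uniform convergence on the disk.

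For the derivatives, I would not rely on termwise differentiation of power series alone. Instead I would differentiate the integral \(C_\gamma f(z)\) directly. For \(z\) in the open disk and \(k\ge1\), I would show
\[
 \partial\, C_{\gamma,k}f(z)=k\,C_{\gamma,k+1}f(z).
\]
This uses the algebraic identity
\[
 \frac{(\zeta-z-h)^{-k}-(\zeta-z)^{-k}}{h}\;\to\;k(\zeta-z)^{-k-1},
\]
which holds uniformly in \(\zeta\in\gamma\) while \(|h|\) stays below half the distance from \(z\) to \(\gamma\). It follows from the mean value (difference quotient) bound for the rational function \(w\mapsto w^{-k}\) on the compact region \(|w|\ge\delta\), or from the factorization \(A^{-k}-B^{-k}=(B-A)\sum_j A^{-j-1}B^{-(k-j)}\).

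The inductive step then goes as follows, with \(f=C_{\gamma,1}f\) as the base case. If \(\partial^{k-1}f=(k-1)!\,C_{\gamma,k}f\), then \(\partial^{k}f=(k-1)!\,k\,C_{\gamma,k+1}f=k!\,C_{\gamma,k+1}f\), which is \eqref{eq:s2-derivatives} with index \(k+1\). So \(f\) has derivatives of all orders. Evaluating at \(z=a\) gives \(a_n=f^{(n)}(a)/n!\), consistent with the series.

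The main obstacle is the uniform control of the difference quotients of the kernel near the contour. The key point is that the estimates must depend only on \(\operatorname{dist}(z,\gamma)>0\), so that they hold throughout the disk and not just at \(a\). I would handle this with the explicit factorization above. Each factor \(|\zeta-z|^{-1}\) and \(|\zeta-z-h|^{-1}\) is at most \(2/\delta\) when \(|h|<\delta/2\), where \(\delta=R-|z-a|\). The quotient's error is then \(O(|h|)\) uniformly on \(\gamma\), and the limit passes under the integral by the length estimate.
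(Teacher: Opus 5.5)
Your proposal is correct and takes essentially the same route as the paper. You expand the kernel as a geometric series with uniform bounds, integrate termwise and apply the reproduction hypothesis to get the series and $|a_n|\le MR^{-n}$, then differentiate under the integral sign, with estimates depending only on the distance to $\gamma$, to get $\partial C_{\gamma,k}f=kC_{\gamma,k+1}f$ and the derivative formulas by induction from $C_\gamma f=f$. (Minor slip: $|w|\ge\delta$ is not compact, and the segment from $\zeta-z$ to $\zeta-z-h$ is only guaranteed to stay in $|w|\ge\delta/2$, but your explicit factorization already handles this.)
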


\begin{proof}
For \(|z-a|\leq r<R\) and \(\zeta\in\gamma\), expand
\[
 \frac1{\zeta-z}=\sum_{n=0}^{\infty}
 \frac{(z-a)^n}{(\zeta-a)^{n+1}}.
\]
The remainder after degree \(N\) is bounded uniformly by
\(R^{-1}(r/R)^{N+1}/(1-r/R)\). Multiplication by the bounded boundary
values of \(f\) and the contour estimate justify termwise integration.
Reproduction gives \eqref{eq:s2-taylor}. If
\(M=\max_\gamma|f|\), the same estimate gives \(|a_n|\leq MR^{-n}\).
For each fixed derivative order, the differentiated series is therefore
uniformly convergent on every smaller disk: the additional polynomial
factors in \(n\) are dominated by the geometric decay \((r/R)^n\).

Finally, on any compact subset of the disk the distance to \(\gamma\)
is positive. The kernel and each of its derivatives are uniformly bounded
there, so differentiation under the integral is legitimate and gives
\(\partial C_{\gamma,k}f=kC_{\gamma,k+1}f\).
Starting from \(C_\gamma f=f\) proves
\eqref{eq:s2-derivatives} at every interior point.
\end{proof}

The geometric-series expansion makes particularly clear the symbiotic
role of the Toolbox in this argument. The power-series expansion obtained
here is the first manifestation of the resolution of the identity.

For a positive disk boundary the operator dictionary is
\begin{equation}\label{eq:s2-dictionary}
 \plainbox{\quad
 C_{\gamma,0}=0,\qquad
 C_{\gamma,1}=I,\qquad
 C_{\gamma,k}=\frac{\partial^{\,k-1}}{(k-1)!}\ (k\geq2).
 \quad}
\end{equation}
For the first equality, termwise integration gives a primitive in the
open disk, so the integral is zero on every concentric circle of radius
\(r<R\). Continuity on the closed disk gives the same conclusion on its
boundary by letting \(r\) tend to \(R\). The global contour qualifications are
established below. The identities act on functions satisfying the reconstruction
hypothesis, with values in the disk.

At this point the same starting principle reaches further into the
foundations: it also yields Morera's theorem, returning us to the
vanishing-integral condition that motivated our initial discussion of primitives.

\corestep{Morera through the same reconstruction mechanism.}{%
Vanishing integrals on triangle boundaries give local primitives.
For these primitives, elementary differentiation establishes the deformation
invariance needed for reproduction. The preceding reconstruction argument
then forces the original continuous function to be analytic.}

We give the precise statement and proof.

\begin{hubtheorem}[Morera]\label{thm:s2-morera}
Let \(f\in C(D)\). Suppose the integral of \(f\) vanishes on the oriented
boundary of every triangle whose closed interior lies in \(D\).
Then \(f\) is analytic on \(D\). It suffices that this hypothesis hold
in a neighborhood of each point.
\end{hubtheorem}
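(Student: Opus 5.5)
The argument is local: analyticity is a local property, so we fix \(a\in D\) and a closed disk \(\overline{B(a,R')}\subset D\) on which the triangle hypothesis holds. It then suffices to prove that \(f\) is analytic on a slightly smaller disk \(B(a,R)\), \(R<R'\). This also covers the final sentence of the statement. On the convex set \(B(a,R')\) we define
\[
 F(z)=\int_{[a,z]} f(\zeta)\,d\zeta .
\]
The vanishing of triangle integrals gives \(F(z+h)-F(z)=\int_{[z,z+h]}f\). Continuity of \(f\) and the contour estimate then give \(F'=f\) on \(B(a,R')\), exactly as in the real fundamental theorem; this is the Toolbox step. So \(f\) has a primitive in the disk, and \(F\) is holomorphic with continuous derivative.

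The plan is then to apply the chain Reproduction \(\Rightarrow\) Analytic reconstruction to \(F\), not to \(f\). Condition (H) must hold for \(F\) on the punctured disk. Fix \(z\in B(a,R)\) and let \(\gamma_s\) be the family of circles
\[
 H(s,t)=(1-s)a+sz+\big((1-s)R+sr\big)e^{it}
\]
from the proof of Theorem~\ref{thm:s2-reproduction}. We show directly that
\[
 \Phi(s)=\int_{\gamma_s}\frac{F(\zeta)}{\zeta-z}\,d\zeta
\]
is constant in \(s\). This homotopy is smooth in \((s,t)\) and avoids \(z\), and the integrand \(g(\zeta)=F(\zeta)/(\zeta-z)\) is \(C^1\) in the complex sense there, with continuous \(g'\). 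Differentiating under the integral gives
\[
 \Phi'(s)=\int_0^{2\pi}\Big(g'(H)\,\partial_sH\,\partial_tH+g(H)\,\partial_s\partial_tH\Big)\,dt .
\]
The integrand equals \(\partial_t\big(g(H)\,\partial_sH\big)\), by the chain rule and symmetry of mixed partials. It integrates to zero because \(H(s,\cdot)\) is \(2\pi\)-periodic. This is the "elementary differentiation" promised in the corestep. With this invariance along the specific family, the proof of Theorem~\ref{thm:s2-reproduction} goes through verbatim: it only uses invariance between \(\partial B(a,R)\) and the small circles \(c_r\), and among the \(c_r\) themselves, where the same computation with \(H(s,t)=z+r(s)e^{it}\) applies. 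It yields \(C_{\partial B(a,R)}F=F\) on \(B(a,R)\).

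Theorem~\ref{thm:s2-analytic} then applies to \(F\), which is continuous on \(\overline{B(a,R)}\). So \(F\) is given by a convergent power series on \(B(a,R)\) and has derivatives of all orders there. Its derivative \(F'=f\) is the termwise differentiated series, which converges locally uniformly as shown in that proof. Hence \(f\) is analytic on \(B(a,R)\), and since \(a\) was arbitrary, on \(D\).

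The main obstacle is the invariance step. We have not assumed condition (H) for arbitrary continuous homotopies, so we must be careful that only smooth families of circles are used, and that \(g\) is genuinely \(C^1\) jointly so that differentiation under the integral and the mixed-partials identity are justified. This is where holomorphy of \(F\) with continuous \(F'=f\) is used: it makes \(g(H(s,t))\) a \(C^1\) function of \((s,t)\) with \(\partial_s g(H)=g'(H)\,\partial_sH\). That complex chain rule is the point where complex differentiability, not mere real smoothness, enters.
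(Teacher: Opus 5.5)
Your proof is correct and matches the paper's argument essentially step for step. You build the local primitive \(F\) from the triangle hypothesis, and you show that \(\int F(\zeta)/(\zeta-z)\,d\zeta\) is invariant along the explicit smooth circle homotopy by writing its \(s\)-derivative as a total \(t\)-derivative. You then apply Theorems~\ref{thm:s2-reproduction} and~\ref{thm:s2-analytic} to \(F\) and differentiate the resulting series termwise to get \(f\). The differences are only cosmetic: you take the base point at the disk's center, and you add an extra homotopy among the small circles \(c_r\), which is redundant because each \(c_r\) is already reached directly from \(\partial B(a,R)\).
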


\begin{proof}
Work in a convex disk \(U\subset D\), fix \(b\in U\), and define
\(F(z)=\int_{[b,z]}f(\zeta)\,d\zeta\).
The triangle with vertices \(b,z,z+h\) gives
\[
 F(z+h)-F(z)=\int_{[z,z+h]}f(\zeta)\,d\zeta.
\]
By continuity,
\[
 \frac{F(z+h)-F(z)}h=\int_0^1 f(z+th)\,dt\longrightarrow f(z),
\]
so \(F'=f\) on \(U\).
Since \(f\) is continuous, \(F\) is real \(C^1\):
\(F_x=f\) and \(F_y=if\).

We next prove the deformation invariance needed for reproduction of \(F\),
without appealing to Goursat. Fix \(z\in U\) and put
\(q(\zeta)=F(\zeta)/(\zeta-z)\).
This function is complex differentiable with continuous derivative on
\(U\setminus\{z\}\). For a smooth homotopy \(H(s,t)\) of closed paths
there, differentiation under the integral gives
\[
 \begin{aligned}
 \frac{d}{ds}\int_0^{2\pi}q(H)H_t\,dt
 &=\int_0^{2\pi}\big(q'(H)H_sH_t+q(H)H_{st}\big)\,dt\\
 &=\big[q(H)H_s\big]_{t=0}^{t=2\pi}=0.
 \end{aligned}
\]
The endpoint terms cancel because the paths are closed.
All quantities involved are continuous on the compact parameter rectangle,
and its image avoids \(z\), which justifies the differentiation.

Apply this calculation to the explicit \(C^\infty\) circle homotopies in the proof of
Theorem~\ref{thm:s2-reproduction}, with
\(\overline{B(a,R)}\subset U\) and any \(z\in B(a,R)\).
The shrinking-circle estimate for the continuous function \(F\) gives
\(C_{\partial B(a,R)}F=F\) throughout the disk.
Theorem~\ref{thm:s2-analytic} supplies a power series for \(F\).
Its termwise derivative is a power series for \(f=F'\).
Such disks cover \(D\), proving the claim.
\end{proof}

Morera starts with vanishing \(1\)-kernel periods; the preceding theorems
start with homotopy invariance for the \(1/z\)-kernel.
The proof connects these hypotheses through a primitive and then uses the
same reproduction mechanism.

\corestep{From invariance to Cauchy--Goursat.}{%
Condition (H) yields the Cauchy--Goursat vanishing conclusion through
analyticity and local primitives.}

We first record explicitly how local primitives control arbitrary
continuous homotopies.

\begin{hublemma}\label{lem:s2-local-primitives}
If a continuous function \(h\) has a primitive in a neighborhood of every
point of an open set \(\Omega\), its integrals are invariant under homotopies
of paths with fixed endpoints and under free homotopies of closed paths
in \(\Omega\). In particular, its integral on a null-homotopic loop is zero.
\end{hublemma}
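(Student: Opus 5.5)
The plan is the standard Lebesgue-number subdivision argument for continuous homotopies, using only the fundamental theorem from the Toolbox. Since $h$ has local primitives, first note the key local fact. If $F$ is a primitive of $h$ on a disk $B$, then for every piecewise $C^1$ path $\sigma$ in $B$ we have $\int_\sigma h=F(\sigma(1))-F(\sigma(0))$. Two primitives on a connected open set differ by a constant, so this value does not depend on which primitive is chosen.

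Let $H:[0,1]\times[0,1]\to\Omega$ be a continuous homotopy, with $H(0,\cdot)=\gamma_0$ and $H(1,\cdot)=\gamma_1$. The intermediate curves $H(s,\cdot)$ need not be piecewise $C^1$, so I will not integrate along them. Instead I use discrete approximations.

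First, the image $H([0,1]^2)$ is compact. Cover it by disks on which $h$ has a primitive. By the Lebesgue number lemma and uniform continuity of $H$, choose $N$ so that each small square
\[
Q_{jk}=[s_{j},s_{j+1}]\times[t_k,t_{k+1}], \qquad s_j=j/N,\ t_k=k/N,
\]
is mapped by $H$ into a single disk $B_{jk}$ carrying a primitive $F_{jk}$.

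Second, for each $j$ let $P_j$ be the polygonal path through the points $H(s_j,t_0),\dots,H(s_j,t_N)$. Each segment lies in its convex disk $B_{jk}$, so $P_j$ is admissible. The same reasoning, applied square by square, shows that $\int_{P_0}h=\int_{\gamma_0}h$. Indeed, on each piece $\gamma_0|_{[t_k,t_{k+1}]}$ and the corresponding segment of $P_0$ lie in the same disk and have the same endpoints, so both integrals equal the same difference of $F_{0k}$. Likewise $\int_{P_N}h=\int_{\gamma_1}h$.

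Third, compare $P_j$ with $P_{j+1}$. Insert the ``rungs'' $[H(s_j,t_k),H(s_{j+1},t_k)]$, each of which lies in $B_{jk}\cap B_{j,k-1}$. Each quadrilateral formed by a segment of $P_j$, a segment of $P_{j+1}$ and two rungs is a closed polygon inside the disk $B_{jk}$. Its integral is therefore zero by the fundamental theorem for $F_{jk}$. Summing over $k$, the interior rungs cancel in pairs, leaving
\[
\int_{P_j}h-\int_{P_{j+1}}h=\int_{\text{rung}_N}h-\int_{\text{rung}_0}h .
\]
\begin{itemize}
\item For fixed endpoints, both end rungs are degenerate, so the right side vanishes.
\item For a free homotopy of closed loops, $H(s,0)=H(s,1)$, so the two end rungs coincide and cancel.
\end{itemize}
Iterating over $j$ gives $\int_{\gamma_0}h=\int_{\gamma_1}h$. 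For a null-homotopic loop, the constant loop has integral zero, which gives the final assertion.

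I expect the main obstacle to be the bookkeeping that makes the rung cancellation legitimate. I will do this by defining the integral over a rung unambiguously as the difference of the two relevant primitives along it. Those primitives differ by a constant on the connected set $B_{jk}\cap B_{j,k-1}$, which is a lens and hence connected. The rest is routine Toolbox calculus.
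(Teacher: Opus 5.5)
Your argument is correct and is essentially the paper's proof: a Lebesgue-number grid on the parameter square, polygonal edges kept inside convex primitive disks, vanishing cell integrals from the local primitives, and replacement of the boundary polygons by the original paths; you merely sum column by column (comparing $P_j$ with $P_{j+1}$) rather than over all cells at once. The only slip is a sign: with your orientation of the rungs, the right side of your displayed identity should be $\int_{\text{rung}_0}h-\int_{\text{rung}_N}h$, which is harmless since that expression vanishes in both the fixed-endpoint and the free-loop case.
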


\begin{proof}
Cover the compact image of a homotopy by disks on each of which a primitive
exists. Subdivide its parameter square into a sufficiently fine rectangular
grid so that the image of each cell lies in one such disk; this is possible
by the Lebesgue number lemma. Join images of neighboring vertices by straight
segments. A shared segment belongs to both cell disks, since disks and their
intersections are convex. The integral around each resulting cell is zero
by its local primitive. Summing cancels the interior edges.
On the exterior edges, replace each segment by the corresponding piece of
the original endpoint path: both lie in the same disk and have the same
primitive difference. For fixed-endpoint homotopies the other two edges
are constant. For free homotopies of loops their polygonal replacements
coincide with opposite orientations. Thus the two path integrals agree.
A constant loop has integral zero.
\end{proof}

Under \eqref{eq:s2-H}, Theorems~\ref{thm:s2-reproduction}
and~\ref{thm:s2-analytic} give local power series. Integrating such a series
termwise gives a local primitive. Lemma~\ref{lem:s2-local-primitives}
therefore yields the Cauchy--Goursat conclusion:
\begin{equation}\label{eq:s2-zero-periods}
 C_{\gamma,0}f=0
 \quad\text{for every closed }\gamma\text{ null-homotopic in }D.
\end{equation}
On a simply connected domain this holds for every closed path and gives a
global primitive. No such global conclusion is asserted on a domain with
holes. Likewise, if \(f\) is analytic, then for fixed \(z\) each weighted
integrand in \eqref{eq:s2-operators} is analytic on \(D\setminus\{z\}\)
and has local primitives. Thus all the \(C_{\gamma,k}\),
\(k\geq1\), descend to homotopy classes there.

Conversely, the classical Goursat theorem derives homotopy invariance
from complex differentiability alone, without continuity of the derivative;
see, for example, \cite[Ch.~4]{Ahlfors1979}.

Consequently, for \(f\in C(D)\), the following characterizations agree:
\[
 \softbox{\begin{gathered}
 \text{condition (H)}\ \Longleftrightarrow\ \text{local Cauchy reproduction}\\
 \Longleftrightarrow\ \text{analyticity}\ \Longleftrightarrow\ \text{holomorphy}.
 \end{gathered}}
\]
Here local reproduction means reproduction on every disk compactly
contained in \(D\). The integral-first development establishes the first
three characterizations directly; Goursat supplies the implication from
complex differentiability alone.

\quietemphasis{This equivalence is a meeting point at the foundations,
not merely a correspondence between later consequences.}
\label{par:geometric-meeting}
Cauchy's Green-theorem argument and Goursat's subdivision proof make
geometry operative at the stage where integral invariance is established,
under different regularity assumptions. Our route instead assumes
invariance for a continuous density and develops the analytic structure
it forces. The two starting points therefore reach the same foundational
junction, from which both component calculus and geometric explanation
can develop. Appendix~\ref{app:foundational-map} displays the forward
construction and records how the classical routes enter it.

\medskip
\noindent\textbf{Notation for the resolution of the identity.}
We collect the preceding construction in compact operator notation.
For analytic \(f\) on a neighborhood of \(\overline{B(a,R)}\), let
\(\gamma=\partial B(a,R)\) be positive and write
\(C_{[\gamma],k}=C_{\gamma,k}\). The coefficients below are independent
of the radius as long as the contour deformation remains admissible.
\begin{plainstatement}
\begin{equation}\label{eq:s2-projections}
 e_n^{(a)}(z)=(z-a)^n,\qquad
 P_n^{(a)}f=\big(C_{[\gamma],n+1}f(a)\big)e_n^{(a)},\quad n\geq0.
\end{equation}
\begin{equation}\label{eq:s2-resolution}
 I=\sum_{n\geq0}P_n^{(a)},\qquad
 P_n^{(a)}P_m^{(a)}=\delta_{nm}P_n^{(a)}.
\end{equation}
The sum converges uniformly on compact subsets of \(B(a,R)\).
Parametrization gives
\(P_n^{(a)}e_m^{(a)}=\delta_{nm}e_m^{(a)}\): the powers are simultaneous
eigenfunctions of these projections. In contrast, differentiation lowers degree:
\[
 C_{\gamma,k}e_n^{(a)}=\binom{n}{k-1}e_{n-k+1}^{(a)}
 \quad(k\geq1,\ n\geq k-1),
\]
and is zero when \(n<k-1\).
\end{plainstatement}
These are not new operators: the notation keeps their common origin and
homotopy dependence visible in the developments of Section~3.

\plainstep{What the construction achieves.}{%
Starting from continuous functions, condition (H) gives Cauchy
reproduction, analyticity, derivatives of every order, the derivative
formulas, and the power-series resolution of the identity. Local primitives
connect this reconstruction mechanism to Morera and to the
Cauchy--Goursat vanishing conclusion. The classical converse identifies
the meeting point with the geometric route. Compact projection notation
records the resulting ``computational arm'' for Section~3.
Appendix~\ref{app:foundational-map} summarizes these dependencies.}

\section{The reach of the starting principle}
\label{sec:reach}

This section develops the explanatory case for our approach.
We illustrate the scope of condition (H) through a cross-section of
applications, interpretations, and structural connections; further
examples can be developed along similar lines. We introduce
notation that condenses and reorganizes the foundational results while
providing a language for further extensions. We first extend the resolution
of the identity to annuli, then examine how our starting principle
illuminates selected results and applications and connects them to more
advanced ideas. We also formulate precise structural parallels with other
theories. Finally, we return to established approaches to a first course in
complex analysis, identifying the key points at which they meet ours.
These connections clarify the place of the proposed approach alongside
familiar presentations and the distinct insights each brings to the subject.

\subsection{From a local resolution to annular projections}

To extend the resolution of the identity, it is illuminating to consider
first an isolated singularity and then a general annulus. We focus on the
general case: let \(A=\{r<|z-a|<R\}\), and suppose \(f\in C(A)\)
satisfies (H).
For \(r<s<|z-a|<t<R\), write \(\gamma_s,\gamma_t\) for the positive
circles about \(a\). Cutting the region between them and excising a small
circle \(c_z\) about \(z\) gives
\[
 C_{[\gamma_t]}f(z)-C_{[\gamma_s]}f(z)=C_{[c_z]}f(z)=f(z).
\]
The internal edges cancel by the local primitive argument of Section~2.
The contour representatives may vary within their admissible classes;
the relation between the three contours is the boundary relation in
Figure~\ref{fig:s3-annulus}.

\begin{figure}[htbp]
\centering
\begin{tikzpicture}[scale=0.85,>=stealth]
\fill[gray!6] (0,0) circle (2.45);
\fill[white] (0,0) circle (0.58);
\draw[dashed,gray!55] (0,0) circle (2.45);
\draw[dashed,gray!55] (0,0) circle (0.58);
\draw[thick] (0,0) circle (2.1);
\draw[->,thick] (35:2.1) arc (35:60:2.1);
\draw[thick] (0,0) circle (0.88);
\draw[->,thick] (125:0.88) arc (125:163:0.88);
\fill (0,0) circle (1.5pt) node[below] {$a$};
\fill (1.4,0.25) circle (1.5pt) node[below] {$z$};
\draw[thick] (1.4,0.25) circle (0.32);
\draw[->,thick] (1.72,0.25) arc (0:65:0.32);
\node at (-1.75,1.4) {$\gamma_t$};
\node at (-1.05,-0.55) {$\gamma_s$};
\node at (1.9,0.75) {$c_z$};
\node[align=left,text width=6.4cm] at (6.5,0) {
\(\gamma_t-\gamma_s-c_z\) bounds the cut region.\\[7pt]
Outer expansion: nonnegative powers.\\[4pt]
Inner expansion: negative powers.};
\end{tikzpicture}
\caption{Three positive contours for annular reconstruction. Their boundary
relation, together with invariance, gives the difference of the two measurements.}
\label{fig:s3-annulus}
\end{figure}

The Toolbox now supplies the same geometric expansion as before, in
\((z-a)/(\zeta-a)\) on the outer contour and in its reciprocal on the
inner contour. Define \(C_{[\gamma],k}\) by the weighted formula
\eqref{eq:s2-operators} for every integer \(k\); for \(k\leq0\) its kernel
is polynomial. This extends the projection notation to
\begin{equation}\label{eq:s3-laurent}
\plainbox{\begin{gathered}
 \lambda_n^{(a)}(f)=C_{[\gamma_\rho],n+1}f(a),\qquad
 P_n^{(a)}f=\lambda_n^{(a)}(f)(z-a)^n,\quad n\in\mathbb Z,\\
 I=\sum_{n\in\mathbb Z}P_n^{(a)},\qquad
 P_n^{(a)}P_m^{(a)}=\delta_{nm}P_n^{(a)}.
\end{gathered}}
\end{equation}
Here \(r<\rho<R\), and reconstruction is locally uniform on \(A\).
Termwise differentiation and integration are again Toolbox operations;
a primitive on the annulus exists exactly when \(\lambda_{-1}^{(a)}(f)=0\),
the sole term that cannot be integrated there by an integer power.

\plainstep{Isolated singularities in projection notation.}{%
For \(0<|z-a|<R\), the residue is the coefficient
\(\operatorname{Res}_a f=\lambda_{-1}^{(a)}(f)\), so
\(P_{-1}^{(a)}f(z)=\lambda_{-1}^{(a)}(f)/(z-a)\).
\par\smallskip\noindent
\emph{Removable:} all negative projections vanish.\par\smallskip\noindent
\emph{Pole of order \(m\):} \(P_{-m}^{(a)}f\ne0\) and
\(P_n^{(a)}f=0\) for \(n<-m\).\par\smallskip\noindent\emph{Essential:} infinitely many
negative projections are nonzero.}
These are the familiar classifications. Their operator notation keeps
reconstruction and its components available for the connections below.

\subsection{Applications: economy, interpretation, and extension}

\subsubsection{Classical results: from economy to further connections}
The examples have progressively different explanatory roles. Liouville,
the identity theorem, and the maximum principle illustrate the economy
of the resolved identity. The argument principle and Rouch\'e reveal a
shared mechanism through change of variable and admissible perturbation.
Cauchy--Pompeiu goes further: the defect of invariance connects contour
deformation, conjugation, and the distinction between holomorphic and
smooth functions. We recall only the arguments needed to expose these roles.

\smallskip\noindent\textbf{\emph{Liouville.}}
For an entire function bounded by \(M\), the estimate already obtained in
Section~2 gives \(|\lambda_n^{(0)}(f)|\leq MR^{-n}\) for every \(R>0\).
Thus all positive components vanish. The same argument makes an entire
function of growth \(O((1+|z|)^m)\) a polynomial of degree at most \(m\).

\smallskip\noindent\textbf{\emph{The identity theorem.}}
The first nonzero term of a local resolution factors out a power and
leaves a nonvanishing factor. Zeros are therefore isolated unless the
function vanishes locally; connectedness propagates that local vanishing.

\smallskip\noindent\textbf{\emph{The maximum principle.}}
Termwise differentiation gives smoothness and the Cauchy--Riemann
equations, hence harmonic real and imaginary parts. At a local maximum
\(a\) of \(|f|\), rotate so that \(e^{-i\theta}f(a)=|f(a)|\).
Then \(u=\operatorname{Re}(e^{-i\theta}f)\leq|f|\) has a local maximum.
The real harmonic maximum principle makes \(u\) locally constant;
the Cauchy--Riemann equations do the same for its conjugate component,
and the identity theorem gives global constancy on the domain.
\\
\textit{These opening examples claim no new theorem or alternative proof:
they exhibit the short, transparent route from (H), through the resolved
identity, to classical arguments. The maximum-principle argument also
uses the real harmonic maximum principle.} 
\newpage
We now turn to more structural uses.

\Needspace{9\baselineskip}

\medskip\noindent\textbf{The argument principle: change of variable.}
Let \(\Gamma\) positively bound a bounded Jordan domain \(U\), with
piecewise \(C^1\) boundary, and let \(F\) be meromorphic near
\(\overline U\), without boundary zeros or poles. The Toolbox chain rule
transfers the measurement to the image contour:
\begin{equation}\label{eq:s3-argument}
 \softbox{C_{\Gamma,0}\!\left(\frac{F'}F\right)
 =C_{F\circ\Gamma,1}1(0)
 =\operatorname{Ind}(F\circ\Gamma,0)
 =N_U(F)-P_U(F).}
\end{equation}
Injectivity is unnecessary. Locally, \(F=(z-a)^m h\), \(h(a)\ne0\),
gives \(F'/F=m/(z-a)+h'/h\); poles give negative orders.
Excision and invariance assemble these local measurements into the
last equality.

The change of variable reveals what the argument principle measures:
the zero--pole count is the Cauchy convolution of the constant density
along the image contour, evaluated at the origin. Condition (H),
applied to the constant density on \(\mathbb C\), makes this measurement
depend only on the homotopy class of the image contour in
\(\mathbb C\setminus\{0\}\).
\quietemphasis{The same operation reads multiplicities in the source
and a homotopy-invariant measurement in the punctured image plane.}
\Needspace{9\baselineskip}
\medskip\noindent\textbf{Rouch\'e: admissible perturbation.}
For holomorphic \(f,g\) near \(\overline U\) with \(|g|<|f|\) on
\(\Gamma\), put \(F_s=f+sg\), \(0\leq s\leq1\). The source contour
stays fixed, but its image moves through
\(H(s,t)=F_s(\Gamma(t))\). The Toolbox estimate
\[
 |F_s|\geq|f|-|g|>0\quad\text{on }\Gamma
\]
makes \(H\) an admissible homotopy in \(\mathbb C\setminus\{0\}\).
Condition (H), applied to the constant density on \(\mathbb C\), with
measurement at zero and admissible contours in \(\mathbb C\setminus\{0\}\), preserves \(C_{F_s\circ\Gamma,1}1(0)\).
By \eqref{eq:s3-argument} this is precisely the zero count, so
\[
 \plainbox{N_U(f)=N_U(f+g).}
\]
\emph{Change of variable identifies the measurement; perturbation preserves
its admissible homotopy class.} This gives a unified reading of the
classical arguments, rather than a new proof of Rouch\'e; compare
\cite[\S10]{Romik2021}.

The count remains fixed, but the zeros can move. Weighted measurements
record that additional information. For a holomorphic,
boundary-nonvanishing family \(F_s\), depending \(C^1\) on a real parameter
on a common open neighborhood of \(\overline U\),
and a weight \(\psi\) holomorphic near \(\overline U\), put
\[
 M_\psi(s)=C_{\Gamma,0}\!\left(\psi\frac{F_s'}{F_s}\right)
 =\sum_j m_j(s)\psi(a_j(s)).
\]
Here \(a_j(s)\) are the distinct zeros inside \(U\) and \(m_j(s)\)
their multiplicities; smooth labelling of individual roots is unnecessary.
The weight \(1\) gives their count, while \(z^k\) gives their power sums.
Differentiation of the contour expression and integration of a total derivative give
\begin{equation}\label{eq:s3-moments}
 M_\psi'(s)=-C_{\Gamma,0}\!\left(\psi'\frac{\partial_sF_s}{F_s}\right).
\end{equation}
For \(F_s(z)=z^2-s\) on the unit disk, \(|s|<1\), the weights
\(1,z,z^2\) give \(2,0,2s\): smooth measurements even at the collision
of roots. For constant \(\psi\), the right side of
\eqref{eq:s3-moments} vanishes, recovering invariance of the count;
nonconstant weights measure its finer variation. Power-sum contour
measurements are also used to locate zeros numerically
\cite{DelvesLyness1967}.

\Needspace{9\baselineskip}
\medskip\noindent\textbf{Cauchy--Pompeiu: the obstruction to invariance.}
Our starting question remains informative when invariance fails.
Let \(f\in C^1\) and set
\(\bar\partial f=\partial f/\partial\bar z=\tfrac12(f_x+if_y)\).
For a smooth homotopy of closed contours \(H(s,t)\) avoiding a fixed
\(z\), differentiation and integration by parts yield
\begin{equation}\label{eq:s3-homotopy-derivative}
 \frac{d}{ds}C_{H(s,\cdot)}f(z)=\frac1{2\pi i}\int
 \frac{\bar\partial f(H)}{H-z}
 \big(\overline{H_s}H_t-\overline{H_t}H_s\big)\,dt.
\end{equation}
Here subscripts on \(H\) inside the integral denote partial derivatives.
Integration by parts in \(t\) cancels all terms containing
\(\partial f\), leaving precisely the \(\bar\partial f\) contribution.
For nested contours bounding a swept region \(S\), the integrated form is
\begin{equation}\label{eq:s3-deformation-defect}
 C_{\gamma_1}f(z)-C_{\gamma_0}f(z)
 =\frac1\pi\iint_S\frac{\bar\partial f(\zeta)}{\zeta-z}\,dA(\zeta),
 \qquad \partial S=\gamma_1-\gamma_0.
\end{equation}
Here \(dA(\zeta)=d\xi\,d\eta\), for \(\zeta=\xi+i\eta\), is ordinary
planar area measure. General deformations count swept area with oriented
multiplicity.
Shrinking the inner circle to \(z\) gives, for a bounded \(C^1\) domain
\(U\) and \(f\in C^1\) near \(\overline U\), with \(z\in U\) and positively oriented boundary \(\partial U\),
\begin{equation}\label{eq:s3-reconstruction-defect}
 \plainbox{f(z)=C_{\partial U}f(z)
 -\frac1\pi\iint_U\frac{\bar\partial f(\zeta)}{\zeta-z}\,dA(\zeta).}
\end{equation}
The boundary term is holomorphic in \(U\); the area term corrects its
failure to reproduce \(f\). Thus \emph{the same \(\bar\partial f\)
measures failure of deformation invariance and failure of analytic
reconstruction}. Formula~\eqref{eq:s3-reconstruction-defect}, also called
the Cauchy--Green formula, is standard; see
\cite[Ch.~IV, \S8, p.~127, (8.5)]{Gamelin2001} and
\cite[\S4.1]{LeblSCV}.
A single zero measurement does not suffice: for \(f(\zeta)=\bar\zeta\),
every circle centered at zero gives \(C_\gamma f(0)=0\).

\smallskip\noindent\emph{Why conjugation recurs in the examples.}
Many familiar smooth, nonholomorphic functions are built from real and
imaginary parts or squared moduli. These operations already contain
conjugation:
\[
 \operatorname{Re}h=\tfrac12(h+\bar h),\qquad
 \operatorname{Im}h=\tfrac1{2i}(h-\bar h),\qquad |h|^2=h\bar h.
\]
For a complex-valued real \(C^1\) function,
\(df=f_z\,dz+f_{\bar z}\,d\bar z\); the second term measures the
failure of its differential to be complex linear. This explains the
recurrence of conjugation without claiming that its mere appearance in
a formula forces nonholomorphy. For example, \(|z|^2\) is complex
differentiable at zero, but holomorphic on no neighborhood of zero.

\smallskip\noindent\emph{The same distinction in power-series language.}
Holomorphic functions have local power-series resolutions. For a real
analytic function the corresponding expansion, with \(w=z-a\), is
\[
 f(a+w)=\sum_{m,n\geq0}c_{mn}w^m\bar w^n,\qquad
 f\text{ holomorphic near }a\ \Longleftrightarrow\ c_{mn}=0\ (n>0).
\]
General \(C^\infty\) functions need not have convergent Taylor expansions,
but their \(\bar\partial\)-defect remains defined. Thus the homotopy
calculation does more than restate an integral formula: it identifies
the same obstruction in deformation, reproduction, and the familiar
contrast between smooth and holomorphic examples. This is precisely the
area correction in \eqref{eq:s3-reconstruction-defect}.

\subsubsection{Riesz projections: the principle in operator theory}
The correspondence with the scalar construction is direct: the scalar
integrand becomes an operator-valued resolvent; its excluded singular set
becomes the spectrum; winding selects a spectral part; and integration
returns the projection onto the corresponding invariant subspace.
For a diagonalizable matrix, this simply keeps the components in the
selected eigenspaces and annihilates the others.

Let \(T\) be bounded on a complex Banach space \(X\), with
\(\sigma(T)=K_1\mathbin{\dot\cup}K_2\) a union of disjoint compact sets.
The resolvent \(R(\zeta,T)=(\zeta I-T)^{-1}\) is holomorphic outside
\(\sigma(T)\). Choose a finite contour cycle \(\Gamma\) there, of index
\(1\) on \(K_1\) and \(0\) on \(K_2\). Then
\begin{equation}\label{eq:s3-riesz}
 \Pi=\frac1{2\pi i}\int_\Gamma R(\zeta,T)\,d\zeta,
 \qquad \Pi^2=\Pi,\qquad T\Pi=\Pi T.
\end{equation}
The spectrum consists of those \(\zeta\) for which \(\zeta I-T\)
is not invertible. It is the excluded set: ``not crossing the spectrum'' means
that the deformed contours stay in the resolvent set. Cauchy invariance
makes the measurement independent of admissible contour representatives;
more generally it depends on the homology class, encoded here by the
indices on the spectral parts. The spectrum need not consist of isolated
points or finitely many holes.

The additional algebraic input
\[
 R(\zeta,T)R(\eta,T)
 =\frac{R(\eta,T)-R(\zeta,T)}{\zeta-\eta}
\]
reduces the double contour integral for \(\Pi^2\) to \(\Pi\).
Complementary spectral selections give \(I-\Pi\) and the invariant
decomposition \(X=\operatorname{ran}\Pi\oplus\ker\Pi\).
These are bounded projections, generally not orthogonal.
\emph{Contour invariance provides consistent selection; the resolvent
identity turns it into a projection.} This is a direct structural
continuation of the elementary viewpoint in an operator-valued setting.
See \cite[\S\S10.23--10.27]{Rudin1991},
\cite[Ch.~VII, Props.~4.6, 4.11 and (6.9)]{Conway1990}, and
\cite[VII.3.17, VII.3.20--21]{DunfordSchwartz1958};
\cite[\S1.3]{Haase2018} gives a concise treatment.

\subsubsection{Planar homology and cohomology: naming the measurements}
For analytic integrands, homotopy invariance and additivity make periods
homomorphisms \(\pi_1(D)\to(\mathbb C,+)\); hence they factor through
\(H_1(D;\mathbb Z)\). Integration forgets the order of concatenation
and retains winding multiplicities. On
\(D=\mathbb C\setminus\{a_1,\ldots,a_m\}\), let \(\gamma_j\) be
small positive circles about the punctures. The dictionary is
\begin{equation}\label{eq:s3-duality}
 \plainbox{\begin{gathered}
 \omega_j=\frac1{2\pi i}\frac{dz}{z-a_j},\qquad
 \int_{\gamma_k}\omega_j=\delta_{jk},\\
 [\gamma]=\sum_j\operatorname{Ind}(\gamma,a_j)[\gamma_j]
 \quad\text{in }H_1(D;\mathbb Z).
 \end{gathered}}
\end{equation}
The punctured plane retracts onto a bouquet of \(m\) circles:
\([\gamma_j]\) are homology generators and indices are their coordinates.
The classes \([\omega_j]\) form the dual basis of complex de Rham
cohomology. Indeed, subtracting
\(\sum_j(\int_{\gamma_j}\alpha)\omega_j\) from a smooth closed form
\(\alpha\) kills all periods, so path integration supplies a potential.
For analytic \(f\), the resulting coordinates are its residues:
\begin{equation}\label{eq:s3-residues}
 C_{\gamma,0}f=\sum_j\operatorname{Ind}(\gamma,a_j)\lambda_{-1}^{(a_j)}(f),
 \qquad
 \left[\frac{f\,dz}{2\pi i}\right]
 =\sum_j\lambda_{-1}^{(a_j)}(f)[\omega_j].
\end{equation}
Thus the loops, indices, kernels, and measurements already in use become,
respectively, cycles, coordinates, dual classes, and the integration
pairing. This is the precise conceptual preparation furnished by the
starting principle. See \cite[Chs.~3, 5--6]{Fulton1995} and
\cite[Ch.~16]{StewartTall2018}.

\subsubsection{A language already at the entrance to advanced theory}
Bell begins his second-course development with the Cauchy transform,
then introduces Hardy space, the Szeg\H{o} projection, and the
Kerzman--Stein formula \cite[Chs.~3--4, pp.~9--16]{Bell2016}.
The elementary correspondence is again concrete: boundary values are the
input, a holomorphic function inside the domain is the output, and the
Cauchy kernel performs reconstruction. For a smooth bounded planar domain,
the interior transform is holomorphic even for nonholomorphic boundary
data; reproduction holds for holomorphic traces. Such arbitrary boundary
data do not themselves carry a homotopy-invariance assertion. Where a
holomorphic extension is available, the contour invariance established
here explains the freedom to choose its admissible representatives. The Szeg\H{o} operator \(S\) is the orthogonal
projection in \(L^2(\partial U,ds)\) onto the Hardy space: the boundary functions obtainable as \(L^2\)
limits of smooth holomorphic traces. The Cauchy boundary projection need not
be orthogonal; their comparison is part of the advanced theory.

On the unit circle the connection with our component notation is literal:
\[
 Sh=\sum_{n\geq0}\widehat h(n)e^{int}\quad\text{in }L^2(\mathbb T),
 \qquad h_+(z)=\sum_{n\geq0}\widehat h(n)z^n\quad(|z|<1).
\]
Here \(h_+\) is the holomorphic extension of the projected boundary data.
Bell's Bergman space uses area measure instead: the orthogonal
projection onto \(A^2(U)=\mathcal O(U)\cap L^2(U,dA)\) is represented
by its reproducing kernel \cite[Ch.~15, pp.~69--74]{Bell2016}.
These Hilbert-space constructions require additional analytic structure.
Our point is the preparation already supplied by the elementary
language: \emph{integral operators, reproduction, and component
projections} are working objects from the outset, making this advanced
starting point immediately recognizable.

\subsection{Parallel Picture in Other Theories: Identity Resolution via One \mbox{Fundamental Action}}

We now exhibit a common pattern across several theories: start with a
``basic operation,'' identify the ``fundamental objects'' it characterizes,
and use the same operation to extract the components that assemble
into a resolution of the identity. The comparison concerns this double
role of the operation. The references to the author's companion manuscripts
provide context for the origin and further development of the ideas;
the arguments needed for the two convolution models are given in full below.

\medskip\noindent\textbf{The elementary model: a basis and an inner product.}
The first example illustrates the idea in its simplest setting: a
finite-dimensional inner-product space. The basis concept already contains reconstruction. An inner product adds
a direct means of recognizing suitable bases and calculating components.
In finite dimension a maximal orthonormal set \(\{v_j\}\) is a basis:
otherwise a nonzero vector in the orthogonal complement could be
normalized and adjoined. The same operation then gives
\[
 P_jv=\langle v,v_j\rangle v_j,\qquad I=\sum_jP_j.
\]
Maximal orthonormal sets are thus the fundamental families, characterized
and used through the inner product; no canonical choice is asserted.

\medskip\noindent\textbf{Regular Sturm--Liouville theory.}
The same picture persists with a differential operator selecting the
family. An integrating factor brings the equation into formally symmetric
weighted form; appropriate self-adjoint boundary conditions specify its
realization in \(L^2_w\). For a regular problem the normalized eigenfunctions
\(\phi_j\) form an orthonormal basis, and
\[
 P_jf=\langle f,\phi_j\rangle_w\phi_j,\qquad
 f=\sum_jP_jf,\qquad Lf=\sum_j\mu_jP_jf\quad(f\in\mathcal D(L)).
\]
The last two sums converge in \(L^2_w\). The weight, domain, and boundary
conditions belong to the selecting data; formal symmetry alone is
insufficient.

\medskip\noindent\textbf{Selection by the operation.}
For the two convolution models below, the selecting condition can be
stated without the general framework of \cite{BenSimonInPreparation}.
In a commutative semigroup \((S,\star)\), write
\(B\star B=\{b\star c:b,c\in B\}\). We select nonempty sets \(B\)
that satisfy \(B\star B=B\) and are minimal under inclusion among
nonempty sets satisfying this equality. This is an additional algebraic
condition, not a consequence of homotopy invariance. The following
argument makes the selection explicit.

\begin{hublemma}\label{lem:s3-minimal-selection}
In a commutative semigroup, the nonempty inclusion-minimal sets
satisfying \(B\star B=B\) are exactly the singletons \(\{p\}\)
with \(p\star p=p\).
\end{hublemma}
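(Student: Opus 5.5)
The easy direction comes first. If \(p\star p=p\), then \(\{p\}\star\{p\}=\{p\star p\}=\{p\}\). The only nonempty subset of \(\{p\}\) is \(\{p\}\) itself, so it is trivially inclusion-minimal. Thus every idempotent singleton is among the selected sets. Conversely, a singleton \(\{p\}\) satisfies the equality exactly when \(p\star p=p\). So the whole content of the lemma is the following claim: \emph{an inclusion-minimal nonempty \(B\) with \(B\star B=B\) has exactly one element}.

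For that claim, assume \(B\) is minimal with \(B\star B=B\) and fix \(b\in B\). The strategy is to build from \(b\) a nonempty set \(C\subseteq B\) with \(C\star C=C\), and then let minimality force \(C=B\). Two facts will be used repeatedly. First, commutativity gives \((X\star Y)\star Z=X\star(Y\star Z)\) and \(X\star Y=Y\star X\) for subsets. Second, the hypothesis \(B\star B=B\) gives both \(B\star B\subseteq B\), so \(B\) is closed, and \(B\subseteq B\star B\), so every element of \(B\) factors inside \(B\).

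My first candidate is the ideal-type set \(C=b\star B\). It is nonempty and lies in \(B\) by closure. Using \(B=B\star B\) one computes \(C\star C=(b\star b)\star B\), and then \(C=b\star B=b\star B\star B\). What must be shown is \((b\star b)\star B=b\star B\).

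If that equality fails, my second candidate is the stabilizer-type set \(C'=\{c\in B: c\star B=B\}\). This set is closed under \(\star\), since \(c\star c'\star B=c\star B=B\). Once \(C=B\) or \(C'=B\) is obtained for every \(b\), each \(b\) satisfies \(b\star B=B\). Then \(B\) behaves like a group: some \(e\in B\) satisfies \(b\star e=b\), and \(e\) can be shown idempotent. Now \(\{e\}\subseteq B\) satisfies \(\{e\}\star\{e\}=\{e\}\), so minimality gives \(B=\{e\}\), and the lemma follows.

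The main obstacle is the middle step: proving \(C\star C=C\) (or \(C'\neq\emptyset\)) for an arbitrary commutative semigroup, with no finiteness hypothesis and no idempotent assumed in advance. The factorization property \(B\subseteq B\star B\) must do all the work, and an element \(x=b\star y\) must be re-expressed as \(b\star b\star y'\) with \(y'\in B\). My first attempt will be to factor \(y=y_1\star y_2\) and iterate, aiming to show that \(\bigcup_{n\ge1}b^{n}\star B\) is a \(\star\)-stable subset to which minimality applies. I expect that checking equality, and not just inclusion, in that union is where the argument will either succeed or require a different choice of \(C\).
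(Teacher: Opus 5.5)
Your easy direction and your endgame are sound. In fact a single \(b\in B\) with \(b\star B=B\) already suffices: write \(b=b\star e\) with \(e\in B\); then every \(x=b\star y\) satisfies \(x\star e=x\), so \(e\) is an identity of \(B\) and hence idempotent. The gap is that you never produce such a \(b\), and that is the whole content of the lemma. Minimality can only be applied to a nonempty subset already known to satisfy \(C\star C=C\), and none of your candidates is known to satisfy it. For \(b\star B\) you need \((b\star b)\star B=b\star B\). The union \(\bigcup_{n\ge1}b^n\star B\) squares only to \(\bigcup_{n\ge2}b^n\star B\). The set \(C'\) is merely closed under \(\star\), and showing \(C'\ne\varnothing\) is already equivalent to the claim.

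These equalities do not follow from \(B\star B=B\), closure and commutativity. In \(((0,\infty),+)\) the set \(B=(0,\infty)\) satisfies \(B+B=B\), yet \(b+B=(b,\infty)\ne(2b,\infty)=2b+B\). The union is again \((b,\infty)\), which fails the equality, and \(C'=\varnothing\). So factoring \(y=y_1\star y_2\) and iterating cannot close the gap: any argument using only those properties applies verbatim to this example. Minimality must enter through sets that satisfy the equality unconditionally.

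The missing idea, which the paper uses, is to apply minimality to images of the power maps. By commutativity \(x\mapsto x^m\) is an endomorphism, so \(B^{[m]}=\{b^m:b\in B\}\) is a nonempty subset of \(B\) with \((B^{[m]})^2=(B^2)^{[m]}=B^{[m]}\). Minimality gives \(B^{[m]}=B\), so every element of \(B\) has an \(m\)-th root in \(B\) for every \(m\). Now choose successive square roots \(b_{j+1}^2=b_j\). The set \(D=\{b_j^q:j\ge0,\ q\ge1\}\) also satisfies \(D^2=D\), hence \(D=B\). A cube root of \(b_0\) is then some \(b_j^q\), so \(b_j^{3q}=b_0=b_j^{2^j}\) with \(3q\ne2^j\). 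An element with \(u^p=u^r\), \(p>r\), has an idempotent power; that power lies in \(B\), and minimality makes its singleton all of \(B\). In the example above, \(D\) is the set of positive dyadic multiples of \(b_0\). It satisfies \(D+D=D\) but omits \(b_0/3\), so neither \(B\) nor \(D\) is minimal, which is exactly the obstruction this argument detects.
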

\begin{proof}
Write products multiplicatively. For each positive integer \(m\), the
set \(B^{[m]}=\{b^m:b\in B\}\) is a nonempty subset of \(B\), and
commutativity gives
\((B^{[m]})^2=(B^2)^{[m]}=B^{[m]}\).
Minimality therefore gives \(B^{[m]}=B\): every element has an
\(m\)-th root in \(B\).
Choose \(b_0\in B\) and successive square roots
\(b_{j+1}^2=b_j\). The set
\[
 D=\{b_j^q:j\geq0,\ q\geq1\}
\]
satisfies \(D^2=D\): two elements are powers of a common later root,
and \(b_j^q=(b_{j+1}^q)^2\). Hence \(D=B\).
A cube root of \(b_0\) thus has the form \(b_j^q\), so
\(b_j^{3q}=b_0=b_j^{2^j}\), with \(3q\ne2^j\).
If \(u^p=u^r\), \(p>r\), then
\(u^{n+p-r}=u^n\) for all \(n\geq r\).
Choose \(k\geq r\) divisible by \(p-r\); repeated use of this
identity gives \(u^{2k}=u^k\). Applying it to \(u=b_j\) produces
an idempotent in \(B\). Its singleton satisfies the same equality,
so minimality makes it all of \(B\). The converse is immediate.
\end{proof}

In an algebra, call a nonzero idempotent \emph{primitive} if it cannot
be written as a sum of two nonzero idempotents whose product is zero.
The lemma selects idempotent blocks; this further algebraic distinction
will identify the individual modes. Neither step presupposes a choice
of basis.
\quietemphasis{In each convolution model, the operation and the stated
selection conditions determine exactly the fundamental sets and their
primitive building blocks.}

\medskip\noindent\textbf{Periodic convolution: an intrinsic selection.}
Use normalized measure \(dm=dt/(2\pi)\) on \(\mathbb T\), and put
\[
 (f*g)(\theta)=\int_{\mathbb T}f(\theta-t)g(t)\,dm(t),
 \qquad f,g\in L^2(\mathbb T).
\]
Here functions are identified almost everywhere.
Cauchy--Schwarz bounds \(|f*g|\) by \(\|f\|_2\|g\|_2\), so the
product lies in \(L^2\). Since \(L^2\subset L^1\) on this probability
space, substitutions and Fubini give associativity, commutativity, and
\[
 \widehat{f*g}(n)=\widehat f(n)\widehat g(n),\qquad
 \widehat f(n)=\int_{\mathbb T}f(t)e^{-int}\,dm(t).
\]

\smallskip\noindent\emph{Uniqueness and convergence.}
For completeness, the needed Fourier uniqueness follows from the
nonnegative kernels
\[
 K_N(t)=\frac1{N+1}\left|\sum_{j=0}^N e^{ijt}\right|^2
 =\sum_{|n|\leq N}\left(1-\frac{|n|}{N+1}\right)e^{int}.
\]
They have integral one, and their mass outside any neighborhood of zero
tends to zero by the finite geometric-sum formula. Translation is
continuous in \(L^2\) (first for continuous functions, then by density),
so
\[
 \|f*K_N-f\|_2\leq
 \int_{\mathbb T}K_N(t)\|f(\,\cdot-t)-f\|_2\,dm(t)
 \longrightarrow0.
\]
Thus all Fourier coefficients zero implies \(f=0\); moreover,
trigonometric polynomials are dense in \(L^2\).

\smallskip\noindent\emph{Selected blocks and individual modes.}
If \(p*p=p\), its coefficients are zero or one. Bessel's inequality
makes their support \(E\subset\mathbb Z\) finite, and uniqueness gives
\(p=p_E:=\sum_{n\in E}e^{int}\). Conversely every such polynomial
is idempotent. Lemma~\ref{lem:s3-minimal-selection} therefore classifies
the selected sets as precisely \(\{p_E\}\), including
\(E=\varnothing\), which gives \(\{0\}\).
Since \(p_E*p_F=p_{E\cap F}\), a block with at least two frequencies
splits into two nonzero orthogonal idempotents. A singleton block cannot
so split. The primitive nonzero idempotents are consequently exactly
\(\varepsilon_n(t)=e^{int}\).

\smallskip\noindent\emph{Reconstruction by the same action.}
A substitution gives \(f*\varepsilon_n=\widehat f(n)\varepsilon_n\), hence
\begin{equation}\label{eq:s3-fourier}
 f*p_E=\sum_{n\in E}\widehat f(n)\varepsilon_n,\qquad
 f*p_{\{-N,\ldots,N\}}\longrightarrow f\quad\text{in }L^2.
\end{equation}
Indeed, the latter sums are orthogonal projections onto the increasing
finite spans of the modes, whose density was just established.
The classification selects the kernels; the convergence argument
justifies reconstruction from their labelled mixed products with \(f\).

\medskip\noindent\textbf{The complex counterpart: selection within a contour algebra.}
This model starts independently with continuous functions on
\(\Omega_b=\{e^{-b}<|z|<e^b\}\), \(b>0\), and
\begin{equation}\label{eq:s3-diamond}
 (f\diamond_\gamma g)(z)=\frac1{2\pi i}\int_\gamma
 f(z/w)g(w)\,\frac{dw}{w},\qquad
 w,z/w\in\Omega_b,\quad\operatorname{Ind}(\gamma,0)=1.
\end{equation}
Let \(\mathcal H_b\subset C(\Omega_b)\) consist of the functions for which
\((1\diamond_\gamma f)(1)\) is invariant under homotopies of winding-one
loops in \(\Omega_b\). At \(z=1\), the condition \(1/w\in\Omega_b\)
is automatic. The structural conclusions needed here follow from the
results already proved in this paper.

\smallskip\noindent\emph{From the single probe to Laurent representation.}
Attach a small triangular loop, using a path traversed in both directions,
to a winding-one loop. Contracting the triangle is an admissible homotopy;
the connecting integrals cancel. Probe invariance therefore implies
\(\int_{\partial\Delta}f(w)\,dw/w=0\) for every triangle contained in a
small disk in \(\Omega_b\). Morera's theorem proved above makes
\(f(w)/w\), and hence \(f\), holomorphic. Conversely, for holomorphic
\(f\), local primitives of \(f(w)/w\) give the required invariance.
Thus \(\mathcal H_b\) is exactly the holomorphic class on \(\Omega_b\),
now obtained from continuity and the probe condition.
Annular
reconstruction supplies
\[
 f(z)=\sum_{n\in\mathbb Z}c_n(f)z^n
 \quad\text{normally on compact subannuli}.
\]
For \(0<c<b\), coefficient estimates on \(|z|=e^{\pm c}\) give
\( |c_n(f)|\leq M_c(f)e^{-c|n|}\), where \(M_c(f)\) is the maximum
of \(|f|\) on those two circles.

\smallskip\noindent\emph{The product and its selected objects.}
For fixed \(z\in\Omega_b\), the admissible \(w\)-region in
\eqref{eq:s3-diamond} is a nonempty annulus: the circle
\(|w|=\sqrt{|z|}\) lies in it. The integrand is holomorphic there,
so the value is independent of the winding-one representative.
On that circle both Laurent series converge absolutely and uniformly;
termwise integration yields
\[
 (f\diamond g)(z)=\sum_{n\in\mathbb Z}c_n(f)c_n(g)z^n.
\]
This is the coefficientwise Hadamard product of the Laurent series;
for the classical power-series product and its multiplicative contour
representation, see~\cite[\S1]{DubussySchneiders2022}.
The coefficient estimates above make this series normally convergent
on \(\Omega_b\) (in fact on \(\Omega_{2b}\)). Thus \(\mathcal H_b\)
is closed under the product, and uniqueness of Laurent coefficients
makes it a commutative associative algebra.

An idempotent has coefficients in \(\{0,1\}\); their exponential
decay forces finite support. The idempotents are therefore exactly
\(p_E(z)=\sum_{n\in E}z^n\), with \(E\subset\mathbb Z\) finite.
Lemma~\ref{lem:s3-minimal-selection} identifies the selected sets as
\(\{p_E\}\). As in the preceding model,
\(p_E\diamond p_F=p_{E\cap F}\), so the primitive nonzero
idempotents are precisely the individual powers \(z^n\).

\smallskip\noindent\emph{Reconstruction by the same action.}
The product formula now gives
\[
 f\diamond z^n=c_n(f)z^n,\qquad
 f\diamond p_{\{-N,\ldots,N\}}\longrightarrow f
 \quad\text{locally uniformly on }\Omega_b.
\]
Explicitly, for \(0<d<c<b\) and \(e^{-d}\leq|z|\leq e^d\), the
truncation error is at most
\[
 \frac{2M_c(f)e^{-(c-d)(N+1)}}{1-e^{-(c-d)}}.
\]
Continuity and probe invariance prepare the analytic class; minimal
self-reproduction selects finite blocks; primitivity selects modes;
and normal convergence reconstructs the target. These are distinct steps,
all justified here. The companion investigation
\cite{BenSimonLaurent}, prompted by this example, develops the model
further. Table~\ref{tab:s3-comparison} collects the structural parallel;
the two convolution models retain their distinct analytic settings.

\begin{table}[H]
\centering\small
\arrayrulecolor{gray!55}
\renewcommand{\arraystretch}{1.25}
\setlength{\tabcolsep}{5pt}
\begin{tabularx}{\textwidth}{|>{\raggedright\arraybackslash}p{2.6cm}|*{3}{>{\raggedright\arraybackslash}X|}}
\hline
\textbf{Setting} & \textbf{Basic action} & \textbf{Fundamental objects} & \textbf{Components and resolution of the identity}\\\hline
\textbf{Finite-dimensional linear algebra}
& \(\langle v,w\rangle\); maximal orthonormal family
& \(v_j\)
& \(\langle v,v_j\rangle v_j\); finite sum\\\hline
\textbf{Regular Sturm--Liouville theory}
& \(\langle f,g\rangle_w\); self-adjoint operator and boundary data
& Eigenfunctions \(\phi_j\)
& \(\langle f,\phi_j\rangle_w\phi_j\); convergence in \(L^2_w\)\\\hline
\textbf{Periodic Fourier analysis}
& \(f*g\); minimal self-reproducing sets
& Primitive idempotents \(e^{int}\)
& \(f*e^{int}\); reconstruction in \(L^2\)\\\hline
\textbf{Complex analysis}
& \(K*_{[\gamma]}f\); also \(\diamond\) with the stated selection
& \((z-a)^n\); \(z^n\) selected by \(\diamond\)
& \(P_n^{(a)}f\); locally uniform reconstruction\\\hline
\end{tabularx}
\caption{One action, fundamental objects, and a resolution of the identity.}
\label{tab:s3-comparison}
\end{table}

Table~\ref{tab:s3-comparison} collects the comparison. A more general
formulation of this pattern is in preparation.
D'Angelo \cite{DAngelo2017} also brings linear, complex, Fourier, and
spectral analysis into a unified presentation, with substantial emphasis
on techniques and applications. The organizing comparison here is more
specific: a single action identifies fundamental objects and then uses
them to resolve the identity. The distinction lies in this organizing mechanism and its context.
The comparison is not simply the existence of bases in linear spaces:
convolution is not an inner product, nor is it supplied by linearity alone.

\subsection{Meeting points with other presentations}

Three overlapping classical approaches clarify the meeting points
with our proposed starting principle.

\medskip\noindent\textbf{Power series first.}
Power series provide the starting point in classical texts such as those
of Cartan and Lang \cite[Preface and Ch.~I]{Cartan1963},
\cite[Chs.~II--III]{Lang1999}. Romik's modern lecture notes likewise
place power series before contour integration \cite[\S\S4--5]{Romik2021}.
Our construction explains the foundational strength of this choice:
invariance yields almost immediately the resolution of the identity,
making coefficient calculus available for both computation and theory.
There is an essential difference in the entry point: we assume only
continuity of the density and condition (H), and derive analyticity.
The power-series route starts with analytic representation itself.
Appendix~\ref{app:foundational-map} locates this alternative entry at the
resolution of the identity and explains how it recovers the foundational
results. On an annulus, the three contours in
Figure~\ref{fig:s3-annulus} and the same geometric expansion supply the
analytic and singular components; that extension is already built into
the invariant operation.

\medskip\noindent\textbf{Geometry as explanation.}
As explained at the foundational junction in
Section~\ref{sec:cauchy-convolutional-hub} and displayed in
Appendix~\ref{app:foundational-map}, Cauchy's Green-theorem argument and
Goursat's subdivision proof meet condition (H) at the beginning of the
theory. Geometry establishes invariance from differentiability; our
starting point develops the structure forced by invariance. This early
meeting explains the sustained dialogue between the two viewpoints.
Building on a long geometric tradition, Needham's
\emph{Visual Complex Analysis} \cite{Needham2023} offers a particularly rich and
systematic realization of this viewpoint.

To glimpse one central idea: consider the interpretation of a
complex integral through circulation and flux; compare Needham's
presentation \cite[\S\S11.2.1, 11.2.4, pp.~547--553]{Needham2023}.
For \(h=u+iv\), the P\'olya field \(\mathbf P_h=(u,-v)\) gives
\[
 \int_\gamma h\,dz=\int_\gamma\mathbf P_h\cdot\mathbf T\,ds
 +i\int_\gamma\mathbf P_h\cdot\mathbf n_R\,ds,
\]
with right normal \(\mathbf n_R\), outward on a positive Jordan boundary.
For a holomorphic density, the Cauchy--Riemann equations give
\[
 \operatorname{div}\mathbf P_h=u_x-v_y=0,\qquad
 \operatorname{curl}\mathbf P_h=-v_x-u_y=0.
\]
Thus both flux and circulation are preserved under admissible deformation.
For \(h=1/z\), the field is radial: \((x,y)/(x^2+y^2)\).
A positive circle has circulation zero and flux \(2\pi\).
For \(h=i/z\), the field is clockwise tangential:
\((y,-x)/(x^2+y^2)\), with circulation \(-2\pi\) and flux zero.
The complex measurement records both in one operation.

\begin{figure}[H]
\centering
\begin{tikzpicture}[scale=0.83,>=stealth]
\foreach \shift in {0,7.1} {
 \begin{scope}[xshift=\shift cm]
 \draw[thick] (0,0) circle (1.6);
 \draw[->,thick] (40:1.6) arc (40:65:1.6);
 \fill (0,0) circle (1.5pt) node[below right] {$0$};
 \end{scope}}
\foreach \ang in {0,30,...,330} {
 \foreach \rr in {0.65,1.2,1.9} {
  \pgfmathsetmacro{\endrr}{\rr+0.22/\rr}
  \draw[->,gray!75] (\ang:\rr)--(\ang:\endrr);
  \begin{scope}[xshift=7.1cm]
   \draw[->,gray!75] (\ang:\rr)--++({0.22*sin(\ang)/\rr},{-0.22*cos(\ang)/\rr});
  \end{scope}
 }}
\node at (0,2.55) {$h(z)=1/z$};
\node at (7.1,2.55) {$h(z)=i/z$};
\node[align=center] at (0,-2.6) {circulation $=0$\\flux $=2\pi$};
\node[align=center] at (7.1,-2.6) {circulation $=-2\pi$\\flux $=0$};
\end{tikzpicture}
\caption{Two P\'olya fields illustrating the real and imaginary measurements
of a complex integral. Deformations avoiding zero preserve both measurements.}
\label{fig:s3-polya}
\end{figure}
This is one illustration of a much broader geometric interpretation,
which includes winding number and many further results.

\medskip\noindent\textbf{Complex differentiability first.}
The systematic development from complex numbers through differentiability
to the integral theorems is represented, for example, by
\cite{Ahlfors1979,BakNewman2010,SaffSnider2003,Isaev2017},
Stewart--Tall and Remmert~\cite{StewartTall2018,Remmert1991}, and
Borcherds's lectures~\cite{BorcherdsLectures}. It presents complex analysis
as a self-contained theory developed from the beginning. Its strength lies in this
completeness and orderly progression; the central organizing
structure may, however, become visible only later in the development.
The meeting point is where complex differentiability yields the integral
theorems and homotopy invariance. So
here the invariant operation is obtained from differentiability;
ours investigates what it forces on continuous functions.

\section{Concluding perspective and possible teaching routes}
\label{sec:conclusion}

Sections~2 and~3 develop a single structural assumption---homotopy invariance of Cauchy convolution integrals for continuous densities---into an organizing principle for elementary complex analysis. It yields Cauchy reproduction, analyticity, the derivative formulas, and a resolution of the identity; the same reconstruction mechanism proves Morera's theorem and gives the Cauchy--Goursat vanishing conclusion. Selected examples then explore the reach of this viewpoint across classical applications, connections to advanced theory, and structural parallels with linear algebra, Fourier analysis, and Sturm--Liouville theory. This development also reveals foundational meeting points with the geometric approach exemplified by Needham and with power-series-first presentations, as summarized in Appendix~\ref{app:foundational-map}. Throughout, the study of Cauchy convolution on homotopy classes and the Toolbox work in symbiosis: the Toolbox makes the construction operational, while the organizing principle gives its familiar operations a coherent analytic purpose. The symbolic forms and operational rules inherited from real-variable calculus are made precise in Appendix~\ref{app:toolbox}.

\medskip\noindent\textbf{Can this viewpoint guide teaching?}
A cautious affirmative answer suggests two possible teaching directions.
We sketch guiding principles from which course outlines might be developed,
without entering into syllabus details.

\Needspace{12\baselineskip}
\medskip\noindent\textbf{A course centered on calculation.}
\begin{enumerate}[leftmargin=2em,itemsep=7pt,topsep=6pt]
\item \textbf{Operational fluency through the Toolbox.}
The first stage gives deliberate priority to symbolic fluency. With
tutorial support, students learn to operate the Toolbox by following the
real-variable parallels in Appendix~\ref{app:toolbox}: compute limits,
differentiate elementary expressions, apply the chain rule, choose useful
parametrizations, and calculate integrals. At this stage the instruction
is direct: identify the operation, consult its parallel form, and carry
it out under the stated hypotheses. Structural depth is deferred rather
than required before calculation can begin.

\item \textbf{Invariance and reconstruction.}
The second stage states invariance of the Cauchy convolution under
admissible contour deformation; the word ``homotopy'' need not be
used. The resolution of the identity receives one of the course's few
complete proofs, first on a disk and then on an annulus using the
three-contour picture.

\item \textbf{Calculation through components.}
Thereafter, local holomorphic functions are handled
through these Taylor and Laurent resolutions. Orders of zeros and poles,
residues, and isolated-singularity types become component data. In the
residue theorem, invariance assembles the local data with the appropriate
winding numbers; integral evaluations add contour choices, estimates,
and limit operations from the Toolbox.

\end{enumerate}

\medskip\noindent\textbf{A course centered on structural understanding.}
\begin{enumerate}[leftmargin=2em,itemsep=7pt,topsep=6pt]
\item \textbf{Examining the Toolbox.}
For a mathematically strong class, begin with the same correspondences
but demand more of them. Students can examine their hypotheses, compare
geometric representations, and investigate how the nonuniqueness of
polar coordinates leads to branch questions.

\item \textbf{Building the foundations from invariance.}
Section~2 then provides the
full construction from invariant convolution. The resolution of the
identity remains a working tool, while the operation on homotopy classes
stays explicit throughout subsequent arguments.

\item \textbf{Developing conceptual understanding through the organizing principle.}
Students use the organizing principle to understand how the concepts and
arguments of complex analysis fit together. For example, the argument principle
becomes change of variable; Rouch\'e becomes admissible perturbation;
periods reveal the passage to homology; and Cauchy--Pompeiu measures the
obstruction to invariance for a nonholomorphic density. Functions
involving conjugation, modulus, and real or imaginary parts provide
contrasting test objects, with the appropriate regularity checked before
applying the defect formula. The class thus learns both how to calculate
and why the same operation continues to organize the theory.
\end{enumerate}

In the technical outline, the structure guides the selection and order
of the tools. In the advanced outline, that structure also remains an
explicit object of investigation throughout the course.

\Needspace{9\baselineskip}
\section*{Declaration of generative AI use}
After the substantive content was fully authored, OpenAI ChatGPT tools were utilized to enhance readability, suggest structural flow improvements, improve language, proofread the manuscript, assist with LaTeX and code preparation, and conduct literature and source searches concerning prior work and originality. All AI-assisted material was independently reviewed and, where necessary, revised by the author.

\appendix
\section{The Real Conceptual Toolbox of Complex Analysis}
\label{app:toolbox}
\par\addvspace{8pt}
{\large\itshape\noindent
The reader will recognize nearly every entry; that is precisely the point.
The tables distinguish the familiar tools from the structural principle
that gives their use its complex-analytic force. The Toolbox is auxiliary,
but indispensable.\par}
\addvspace{10pt}

The tables display the inherited symbolic calculus used throughout the
paper. The first two compare scalar functions; the third compares planar
vector fields with complex functions. Here \(I\subset\mathbb R\) is an
open interval, \(D\subset\mathbb C\) a domain, and integrands are
continuous on their integration paths. Paths are piecewise \(C^1\).
Increments in a tagged sum are differences of consecutive endpoints;
\(|P|\to0\) means that the largest subinterval or subarc length tends
to zero. Equivalently, refine a fixed piecewise \(C^1\) parametrization.

The comparison concerns the form of definitions and operations under
their stated hypotheses. It does not identify the implications of those
definitions: the rigidity forced by complex differentiability is precisely
what the main text develops. Repeated integral rules are intentional,
since they are visible from both the scalar and the line-integral viewpoint.

\begingroup
\fontsize{10.5}{12.4}\selectfont
\setlength{\tabcolsep}{6pt}
\renewcommand{\arraystretch}{1.1}
\arrayrulecolor{gray!55}
\newlength{\toolcol}
\setlength{\toolcol}{\dimexpr(\textwidth-6\tabcolsep-4\arrayrulewidth-2.6cm)/2\relax}
\setlength{\LTpre}{8pt}
\setlength{\LTpost}{10pt}
\setlength{\abovedisplayskip}{5pt}
\setlength{\belowdisplayskip}{5pt}
\begin{longtable}{|>{\raggedright\arraybackslash}p{2.6cm}|*{2}{>{\raggedright\arraybackslash}p{\toolcol}|}}
\caption{Scalar calculus: definitions and operations.}\label{tab:s4-scalar}\\
\hline
\rowcolor{gray!7}\textbf{Property} & \textbf{Complex: \(z_0,f(z)\)} & \textbf{Real: \(x_0,f(x)\)}\\\hline
\endfirsthead
\multicolumn{3}{l}{\textbf{Table \thetable\ (continued). Scalar calculus}}\\[3pt]
\hline\rowcolor{gray!7}\textbf{Property} & \textbf{Complex} & \textbf{Real}\\\hline
\endhead
\endfoot
\endlastfoot
\textbf{1. Limit}
& \(\displaystyle\lim_{z\to z_0}f(z)=L\).
& \(\displaystyle\lim_{x\to x_0}f(x)=L\).\\\hline
\textbf{Limit arithmetic}
& If \(f(z)\to A\), \(g(z)\to B\), then\par
\(\alpha f+\beta g\to\alpha A+\beta B\),\par
\(fg\to AB\), \(f/g\to A/B\) if \(B\ne0\).
& If \(f(x)\to A\), \(g(x)\to B\), then\par
\(\alpha f+\beta g\to\alpha A+\beta B\),\par
\(fg\to AB\), \(f/g\to A/B\) if \(B\ne0\).\\\hline
\textbf{Composition of limits}
& If \(f(z)\to b\) and \(g\) is continuous at \(b\), then \(g(f(z))\to g(b)\).
& If \(f(x)\to b\) and \(g\) is continuous at \(b\), then \(g(f(x))\to g(b)\).\\\hline
\textbf{2. Continuity at a point}
& \(\displaystyle\lim_{z\to z_0}f(z)=f(z_0)\).
& \(\displaystyle\lim_{x\to x_0}f(x)=f(x_0)\).\\\hline
\textbf{3. Continuity on an open set}
& \(f\in C(D)\).
& \(f\in C(I)\).\\\hline
\textbf{Continuity calculus}
& For continuous \(f,g\): \(\alpha f+\beta g\), \(fg\), and \(f/g\) where \(g\ne0\) are continuous. So is \(g\circ f\) where defined, with \(g\) continuous on the range of \(f\).
& For continuous \(f,g\): \(\alpha f+\beta g\), \(fg\), and \(f/g\) where \(g\ne0\) are continuous. So is \(g\circ f\) where defined, with \(g\) continuous on the range of \(f\).\\\hline
\textbf{4. Derivative at a point}
& \(\displaystyle f'(z_0)=\lim_{z\to z_0}\frac{f(z)-f(z_0)}{z-z_0}\).
& \(\displaystyle f'(x_0)=\lim_{x\to x_0}\frac{f(x)-f(x_0)}{x-x_0}\).\\\hline
\textbf{5. Differentiability on an open set}
& \(f:D\to\mathbb C\), differentiable at every \(z\in D\).
& \(f:I\to\mathbb R\), differentiable at every \(x\in I\).\\\hline
\textbf{6. Arithmetic and chain rule}
& \(\begin{aligned}(f\pm g)'&=f'\pm g',\\(fg)'&=f'g+fg',\\(f/g)'&=\frac{f'g-fg'}{g^2},\\(g\circ f)'&=(g'\circ f)f'.\end{aligned}\)\par
At \(z\); derivatives exist, \(g\ne0\), composition defined.
& \(\begin{aligned}(f\pm g)'&=f'\pm g',\\(fg)'&=f'g+fg',\\(f/g)'&=\frac{f'g-fg'}{g^2},\\(g\circ f)'&=(g'\circ f)f'.\end{aligned}\)\par
At \(x\); derivatives exist, \(g\ne0\), composition defined.\\\hline
\textbf{7. Linear approximation}
& \(f'(z_0)\) exists iff, for some \(m\in\mathbb C\),\par
\(\begin{aligned} f(z)-f(z_0)&=m(z-z_0)\\&\quad+o(|z-z_0|).\end{aligned}\)\par
Necessarily \(m=f'(z_0)\).
& \(f'(x_0)\) exists iff, for some \(m\in\mathbb R\),\par
\(\begin{aligned} f(x)-f(x_0)&=m(x-x_0)\\&\quad+o(|x-x_0|).\end{aligned}\)\par
Necessarily \(m=f'(x_0)\).\\\hline
\textbf{8. Integral as a limit of sums}
& \(\displaystyle\int_\gamma f(z)\,dz\)\par
\(\displaystyle\quad=\lim_{|P|\to0}\sum_j f(\zeta_j)\Delta z_j\).\par
\(\Delta z_j=z_j-z_{j-1}\); the tag \(\zeta_j\) lies on the subarc.
& \(\displaystyle\int_a^b f(x)\,dx\)\par
\(\displaystyle\quad=\lim_{|P|\to0}\sum_j f(\xi_j)\Delta x_j\).\par
\(\Delta x_j=x_j-x_{j-1}\); the tag \(\xi_j\) lies in the subinterval.\\\hline
\textbf{Linearity}
& \(\displaystyle\int_\gamma(\alpha f+\beta g)\,dz\)\par
\(\displaystyle=\alpha\int_\gamma f\,dz+\beta\int_\gamma g\,dz\).
& \(\displaystyle\int_a^b(\alpha f+\beta g)\,dx\)\par
\(\displaystyle=\alpha\int_a^b f\,dx+\beta\int_a^b g\,dx\).\\\hline
\textbf{Reversal}
& \(\displaystyle\int_{\gamma^{-1}}f\,dz=-\int_\gamma f\,dz\).
& \(\displaystyle\int_b^a f\,dx=-\int_a^b f\,dx\).\\\hline
\textbf{Substitution via the chain rule}
& \(\displaystyle\int_{\phi\circ\gamma} f(w)\,dw\)\par
\(\displaystyle=\int_\gamma f(\phi(z))\phi'(z)\,dz\),\par
for \(\phi\) with continuous complex derivative near \(\gamma\).
& \(\displaystyle\int_{\phi(a)}^{\phi(b)}f(u)\,du\)\par
\(\displaystyle=\int_a^b f(\phi(x))\phi'(x)\,dx\),\par
for \(\phi\in C^1\); \(f\) continuous on an interval containing \(\phi([a,b])\).\\\hline
\textbf{Integration by parts}
& \(\displaystyle\int_\gamma fg'\,dz\)\par
\(\displaystyle=[fg]_\gamma-\int_\gamma f'g\,dz\),\par
where \([fg]_\gamma=fg(\gamma(b))-fg(\gamma(a))\); derivatives are continuous near the path.
& \(\displaystyle\int_a^b fg'\,dx\)\par
\(\displaystyle=[fg]_a^b-\int_a^b f'g\,dx\),\par
for \(f,g\in C^1([a,b])\).\\\hline
\textbf{Newton--Leibniz, given a primitive}
& If \(F'=f\) near \(\gamma\),\par
\(\displaystyle\int_\gamma f\,dz=F(\gamma(b))-F(\gamma(a))\).
& If \(F'=f\) on \([a,b]\),\par
\(\displaystyle\int_a^b f\,dx=F(b)-F(a)\).\\\hline
\end{longtable}

\Needspace{25\baselineskip}
\begin{longtable}{|>{\raggedright\arraybackslash}p{2.6cm}|*{2}{>{\raggedright\arraybackslash}p{\toolcol}|}}
\caption{Scalar power series and termwise calculus.}\label{tab:s4-series}\\
\hline\rowcolor{gray!7}\textbf{Property} & \textbf{Complex: \(z_0,S(z)\)} & \textbf{Real: \(x_0,S(x)\)}\\\hline
\endfirsthead
\hline\rowcolor{gray!7}\textbf{Property} & \textbf{Complex} & \textbf{Real}\\\hline
\endhead
\endfoot\endlastfoot
\textbf{Formal definition}
& \(\displaystyle\sum_{n\geq0}a_n(z-z_0)^n\), \(a_n\in\mathbb C\).
& \(\displaystyle\sum_{n\geq0}a_n(x-x_0)^n\), \(a_n\in\mathbb R\).\\\hline
\textbf{Radius of convergence}
& There is \(R\in[0,\infty]\): absolute convergence for \(|z-z_0|<R\), divergence for \(|z-z_0|>R\).
& There is \(R\in[0,\infty]\): absolute convergence for \(|x-x_0|<R\), divergence for \(|x-x_0|>R\).\\\hline
\textbf{Continuity of the sum}
& For \(0<r<R\), uniform convergence on \(|z-z_0|\leq r\); the sum \(S\) is continuous inside the convergence disk.
& For \(0<r<R\), uniform convergence on \(|x-x_0|\leq r\); the sum \(S\) is continuous inside the convergence interval.\\\hline
\textbf{Termwise integration}
& \(\displaystyle\int_{[z_0,z]} S(w)\,dw\)\par
\(\displaystyle=\sum_{n\geq0}\frac{a_n}{n+1}(z-z_0)^{n+1}\).
& \(\displaystyle\int_{x_0}^x S(t)\,dt\)\par
\(\displaystyle=\sum_{n\geq0}\frac{a_n}{n+1}(x-x_0)^{n+1}\).\\\hline
\textbf{Termwise differentiation}
& \(\displaystyle S'(z)=\sum_{n\geq1}na_n(z-z_0)^{n-1}\).
& \(\displaystyle S'(x)=\sum_{n\geq1}na_n(x-x_0)^{n-1}\).\\\hline
\end{longtable}
The last three rows assume \(R>0\) and apply within the radius of convergence.

\clearpage
\begin{longtable}{|>{\raggedright\arraybackslash}p{2.6cm}|*{2}{>{\raggedright\arraybackslash}p{\toolcol}|}}
\caption{Planar fields and the line-integral calculus.}\label{tab:s4-fields}\\
\hline\rowcolor{gray!7}\textbf{Property} & \textbf{Complex: \(f=u+iv\)} & \textbf{Vector: \(\vec f=(u,v)\)}\\\hline
\endfirsthead
\multicolumn{3}{l}{\textbf{Table \thetable\ (continued). Planar fields}}\\[3pt]
\hline\rowcolor{gray!7}\textbf{Property} & \textbf{Complex} & \textbf{Vector}\\\hline
\endhead
\endfoot\endlastfoot
\textbf{Limits and continuity}
& Equivalent to the respective properties of \(u,v\) at \((x_0,y_0)\), where \(z_0=x_0+iy_0\).
& Equivalent to the respective properties of \(u,v\) at \((x_0,y_0)\).\\\hline
\textbf{Integral as a limit of sums}
& \(\displaystyle\int_\gamma f(z)\,dz\)\par
\(\displaystyle=\lim_{|P|\to0}\sum_j f(\gamma(\tau_j))\Delta\gamma_j\).\par
Complex multiplication.
& \(\displaystyle\int_\gamma\vec f\cdot d\vec r\)\par
\(\displaystyle=\lim_{|P|\to0}\sum_j\vec f(\vec\gamma(\tau_j))\cdot\Delta\vec\gamma_j\).\par
Scalar product.\\\hline
\textbf{Domain of integration}
& \(\gamma(t)=x(t)+iy(t)\);\par
\(\Delta\gamma_j=\gamma(t_j)-\gamma(t_{j-1})\).
& \(\vec\gamma(t)=(x(t),y(t))\);\par
\(\Delta\vec\gamma_j=\vec\gamma(t_j)-\vec\gamma(t_{j-1})\).\\\hline
\textbf{Linearity}
& \(\displaystyle\int_\gamma(\alpha f+\beta g)\,dz\)\par
\(\displaystyle=\alpha\int_\gamma f\,dz+\beta\int_\gamma g\,dz\),\par
\(\alpha,\beta\in\mathbb C\).
& \(\displaystyle\int_\gamma(\alpha\vec f+\beta\vec g)\cdot d\vec r\)\par
\(\displaystyle=\alpha\int_\gamma\vec f\cdot d\vec r+\beta\int_\gamma\vec g\cdot d\vec r\),\par
\(\alpha,\beta\in\mathbb R\).\\\hline
\textbf{Path reversal}
& \(\displaystyle\int_{\gamma^{-1}} f\,dz=-\int_\gamma f\,dz\).
& \(\displaystyle\int_{\gamma^{-1}}\vec f\cdot d\vec r=-\int_\gamma\vec f\cdot d\vec r\).\\\hline
\textbf{Concatenation}
& \(\displaystyle\int_{\gamma_1\cdot\gamma_2}f\,dz\)\par
\(\displaystyle=\int_{\gamma_1}f\,dz+\int_{\gamma_2}f\,dz\).
& \(\displaystyle\int_{\gamma_1\cdot\gamma_2}\vec f\cdot d\vec r\)\par
\(\displaystyle=\int_{\gamma_1}\vec f\cdot d\vec r+\int_{\gamma_2}\vec f\cdot d\vec r\).\\\hline
\textbf{Change of parameter}
& Unchanged by an orientation-preserving piecewise \(C^1\) change of parameter.
& Unchanged by an orientation-preserving piecewise \(C^1\) change of parameter.\\\hline
\textbf{Integral estimate}
& \(\displaystyle\left|\int_\gamma f\,dz\right|\leq\int_\gamma|f|\,ds\leq ML\),\par
\(M=\sup_\gamma|f|\), \(L=\operatorname{length}(\gamma)\).
& \(\displaystyle\left|\int_\gamma\vec f\cdot d\vec r\right|\leq\int_\gamma\|\vec f\|\,ds\leq ML\),\par
\(M=\sup_\gamma\|\vec f\|\), \(L=\operatorname{length}(\gamma)\).\\\hline
\end{longtable}
\endgroup

On each smooth piece,
\(\Delta\gamma_j=\gamma'(\tau_j)\Delta t_j+o(\Delta t_j)\), uniformly
as the mesh tends to zero. The same statement holds for the vector
parametrization. \emph{The Riemann-sum pattern and the symbolic integral
rules agree; the products differ:}
\[
 \vec f\cdot d\vec r=u\,dx+v\,dy,\qquad
 f\,dz=(u\,dx-v\,dy)+i(v\,dx+u\,dy).
\]
These rules require no holomorphy. On a domain, vanishing of all
closed-path integrals detects a potential for a continuous vector field
or a primitive for a continuous complex function, by path integration.
Existence of a complex primitive in Table~\ref{tab:s4-scalar} is a
hypothesis, not an automatic consequence of continuity.

\Needspace{45\baselineskip}
\section{From the Core Principle to the Foundational Results of Complex Analysis}
\label{app:foundational-map}

The diagram summarizes how condition (H) generates the foundational results developed in Section~2.

\begin{center}
\begin{tikzpicture}[>=Stealth,
 box/.style={draw=gray!55,fill=gray!11,rounded corners=2pt,
 align=center,text width=4cm,inner sep=7pt,font=\small},
 arr/.style={->,thick,gray!80}]
\node[box,text width=4.4cm,inner sep=9pt,line width=1.5pt,draw=gray!80] (H) at (0,0)
 {\textbf{Condition (H)}\\Continuous density; homotopy invariance of its Cauchy integrals};
\node[box,text width=3.8cm,anchor=north] (G) at ([xshift=-5.2cm]H.north)
 {\textbf{Cauchy--Goursat conclusion}\\Analyticity $\to$ local primitives $\to$ zero null-homotopic periods};
\node[box,text width=3.8cm] (M) at (5.2,0)
 {\textbf{Morera}\\Local vanishing $\to F'=f$; reconstruction of $F$ $\to$ analytic $f$};
\draw[arr] (H.west)--(G.east |- H.west);
\draw[arr] (H)--(M);
\node[box,text width=5cm] (C) at (0,-3)
 {\textbf{Cauchy reproduction}\\$C_{\partial B}f=f$ inside the disk};
\draw[arr] (H)--(C);
\node[box,text width=3.8cm] (A) at (-5.1,-6.2)
 {\textbf{Analyticity}\\Convergent local power series; derivatives of every order};
\node[box,text width=3.8cm] (I) at (0,-6.2)
 {\textbf{Resolution of the identity}\\$I=\sum_{n\geq0}P_n^{(a)}$\\locally uniform reconstruction};
\node[box,text width=3.8cm] (D) at (5.1,-6.2)
 {\textbf{Cauchy derivative formulas}\\$C_{\gamma,k}f=f^{(k-1)}/(k-1)!$};
\draw[arr] (C.south)--++(0,-0.9)-|(A.north);
\draw[arr] (C.south)--(I.north);
\draw[arr] (C.south)--++(0,-0.9)-|(D.north);
\end{tikzpicture}
\end{center}

\begingroup\small
\noindent\textbf{Downward arrows.} Shrinking circles and continuity give
reproduction; geometric series, estimates, termwise calculus, and kernel
differentiation give the three lower conclusions.

\smallskip\noindent\textbf{Upper branches.} Local primitives give the
Cauchy--Goursat conclusion (Lemma~\ref{lem:s2-local-primitives}). Morera
starts with its own vanishing-integral hypothesis: construct \(F'=f\),
apply reconstruction to \(F\), and differentiate
(Theorem~\ref{thm:s2-morera}). The side arrows record reuse of the mechanism.

\smallskip\noindent\textbf{Equivalent entry points.} The classical
Cauchy--Goursat route establishes (H) from complex differentiability;
a power-series-first route starts at the resolved identity and recovers
local primitives, invariance, and the integral formulas. The diagram
therefore locates the foundational meeting with both the geometric and
the power-series approaches; reverse arrows are omitted for clarity.
\endgroup

\end{document}